\newtheorem{theorem}{Theorem}[section]
\newtheorem{lemma}[theorem]{Lemma}
\newtheorem{corollary}[theorem]{Corollary}
\newtheorem{proposition}[theorem]{Proposition}
\newtheorem{remark}[theorem]{Remark}
\newtheorem{definition}[theorem]{Definition}
\newcommand{\nc}{\newcommand}
\newcommand{\cH}{{\mathcal H}}
\newcommand{\cA}{{\mathcal A}}
\newcommand{\cC}{{\mathcal C}}
\newcommand{\cO}{{\mathcal O}}
\newcommand{\cK}{{\mathcal K}}
\newcommand{\cX}{{\mathcal X}}
\newcommand{\cF}{{\mathcal F}}
\newcommand{\cQ}{{\mathcal Q}}
\newcommand{\cP}{{\mathcal P}}
\newcommand{\cL}{{\mathcal L}}
\newcommand{\cM}{{\mathcal M}}
\newcommand{\cU}{{\mathcal U}}
\newcommand{\bH}{{\mathbb H}}
\newcommand{\bC}{{\mathbb C}}
\newcommand{\bZ}{{\mathbb Z}}
\newcommand{\bN}{{\mathbb N}}
\newcommand{\bP}{{\mathbb P}}
\newcommand{\bU}{{\mathbb U}}
\newcommand{\bR}{{\mathbb R}}
\nc{\OLD}{\textcolor{blue}{OLD. (}}
\nc{\NEW}{\textcolor{blue}{NEW. (}}
\nc{\blue}[1]{\textcolor{blue}{ #1}}
\nc{\fr}{{\rightarrow}}
\nc{\co}{{\nabla}}
\nc{\cu}{{\overlineline{\nabla}}}
\nc{\gmc}{\nabla}
\nc{\mtin}[1]{\mbox{{\tiny #1}}}
\nc{\rank}[1]{r_{\mbox{{\tiny #1}}}}
\DeclareMathOperator{\Ext}{Ext}
\DeclareMathOperator{\Hom}{Hom}
\DeclareMathOperator{\rk}{rk}
\DeclareMathOperator{\spec}{Spec}
\DeclareMathOperator{\Pic}{Pic}
\newtheoremstyle{dico}
 {\baselineskip}   
  {\topsep}   
  {}  
  {0pt}       
  {} 
  {.}         
  {5pt plus 1pt minus 1pt} 
  {}          
\theoremstyle{dico}
\numberwithin{equation}{section}
\newcommand{\om}{\omega}
\newcommand{\Cliff}{\operatorname{Cliff}}
\newcommand{\gon}{ \operatorname{gon}}
\nc{\lra}{{\longrightarrow}}
\newcommand{\vhs}{\bH_\bZ}
\newcommand{\Sym}{Sym}
\newcommand{\cliff}{cliff}
\newcommand{\Nm}{Nm}
\newcommand{\Div}{Div}
\newcommand{\St}{St}
\begin{document}

\title{On the  Jacobian locus in the Prym locus and geodesics}

\author[S. Torelli]{Sara Torelli}
\address{Dipartimento di Matematica,
	Universit\`a di Pavia,
	Via Ferrata, 5,
	27100 Pavia, Italy}
\email{sara.torelli7@gmail.com}

\thanks{The authors were supported by PRIN 2017 Moduli spaces and Lie
  Theory, INdAM - GNSAGA,
FAR 2016 (Pavia) ``Variet\`a algebriche, calcolo algebrico, grafi orientati e topologici'' and MIUR, Programma Dipartimenti di Eccellenza
(2018-2022) - Dipartimento di Matematica ``F. Casorati'', Universit\`a degli Studi di Pavia.}
 

\keywords{Moduli space of curves and abelian varieties; geodesics ; Prym locus, Jacobian locus, generalized Prym varieties, admissable coverings}
 
\subjclass[2010]{14H40, 14C30, 14H10, 14H15, 32G20, 14k12} 

\date{\today}

\begin{abstract}
	In the paper we consider the Jacobian locus $\overline{J_g}$ and the Prym locus $\overline{P_{g+1}}$, in the moduli space  $A_g$ of principally polarized abelian varieties of dimension $g$, for $g\geq 7$, and we study the extrinsic geometry of $\overline{J_g}\subset \overline{P_{g+1}}$, under the inclusion provided by the theory of generalized Prym varieties as introduced by Beauville. More precisely, we study certain geodesic curves with respect to the Siegel metric of $A_g$, starting at a Jacobian variety $[JC]\in A_g$ of a curve $[C]\in M_g$ and with direction $\zeta\in  T_{[JC]}J_g$. We prove that for a general $JC$, any geodesic of this kind is not contained in $\overline{J_g}$ and even in $\overline{P_{g+1}}$, if $\zeta$ has rank $k<\Cliff C-3$, where $\Cliff C$ denotes the Clifford index of $C$.
\end{abstract}

\maketitle

\tableofcontents

\section{Introduction} The study of the extrinsic geometry of the Jacobian locus $\overline{J_g}$ in the moduli space $A_g$ of principally polarized abelian varieties, namely of the closure of the locus of Jacobian varieties in $A_g$, has been stimulated by different many problems throughout its history (two of the best known are the Torelli and Schottky problems). In particular, the theory of generalized Prym varieties as developed by Beauville in \cite{B77}  highlighted a natural inclusion of the Jacobian locus inside boundary of the Prym locus $\overline{P_{g+1}}$, which is the closure in $A_g$ of the locus of Prym varieties associated to  \'etale coverings of degree $2$ onto smooth projective curves of genus $g+1$. In terms of the study of the extrinsic geometry of $\overline{J_g}\subset A_g$ such a theory suggested a refined study of $\overline{J_g}\subset \overline{P_{g+1}}$ by using the parametrization of Jacobian varieties as generalized Prym varieties.  In this paper we address such a problem by studying geodesics along the Jacobian locus, defined with respect to the metric induced on $A_g$ by the Siegel metric of the Siegel upper half space $H_g$ (which is nothing but that the universal covering of $A_g$). The precise questions we aim to answer are the following. Consider a  geodesic (even local) in $A_g$ and assume that it is tangent at a point of $J_g$ (that is, at a point not lying in the boundary). 

{\bf Question 1.} Are these geodesics of $A_g$ that are tangent at a point of $J_g$ locally contained in the Jacobian locus around this point? 

In case of negative answer, 

{\bf Question 2.} Is there any proper sub-locus than $A_g$ where these geodesics that are tangent at a point of $J_g$ are locally contained around this point? 

The motivating interest has been suggested by the fact that the Jacobian locus is expected to be rather curved inside $A_g$, meaning that not many geodesics as before  are expected to be (locally) contained inside it and even in any proper sub-locus of $A_g$. This fact, according to some classical results as in \cite{G_ATrascendental_1971} and even very recent as in \cite{CF19} and \cite{KO19}. Nevertheless, if there exists a locus smaller than $A_g$ and closed with respect to this local property on geodesics, then the first one that should be detected is the Prym locus, according to the result of \cite{B77} recalled above. The main result of the  paper gives a partial answer to these questions and shows that the expectation is true if we consider the Prym locus as the sub-locus of $A_g$ and certain geodesics satisfying a property depending on the clifford index of the curve that realizes the investigated point of $J_g$. 

\vspace{1.7mm}
Let $M_g$ be the moduli space of curves of genus $g$ and let $R_{g+1}$ be the moduli space of degree $2$ \'etale coverings of curves of genus $g+1$. For any $[C]\in M_g$, let $JC$ be the Jacobian variety and let  $\Cliff C$ denote the Clifford index of $C$ (see \cite{ACGH}). For any $\zeta \in T_{[JC]}J_g$, let $\xi \in H^1(C, T_C)$ such that $\zeta $ can be identified with the cup product map $\cup \xi: H^0(C, \omega_C)\to H^0(C, \omega_C)^\vee$, under the isomorphism  $T_{[JC]}A_g\simeq \Sym^2H^0(C, \omega_C)^\vee$. We define the rank of $\zeta$ as the rank of $\cup \xi$. 
 We prove the following
\begin{theorem}\label{Thm-Main1} Let $JC$ be a general Jacobian variety of dimension $g\geq 7$.  Then for any $\zeta\in  T_{[JC]}J_g$ of rank $k=\rk \zeta <\Cliff C-3$, the local geodesic in $A_g$ at $[JC]$ with direction $\zeta$ (defined with respect to the Siegel metric) is not contained in the Prym locus $\overline{P_{g+1}}$  (in particular, also in $\overline{J_g}\subset \overline{P_{g+1}}$).
	\end{theorem}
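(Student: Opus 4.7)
The plan is to reduce the non-containment of the geodesic to the non-vanishing of a second fundamental form, and then to control this second fundamental form by the Clifford index via an argument in the spirit of Colombo--Pirola--Tortora. Recall that a geodesic of $A_g$ at $[JC]$ with initial direction $\zeta$ is locally contained in a closed analytic sub-locus $Y \subset A_g$ if and only if $\zeta$ lies in the kernel of the second fundamental form $II_Y$ of $Y$ in $A_g$ at $[JC]$ (with respect to the Levi--Civita connection of the Siegel metric). Hence, to prove Theorem \ref{Thm-Main1} it suffices to show that $II_{\overline{P_{g+1}}, [JC]}(\zeta, \zeta) \neq 0$ for every $0 \neq \zeta \in T_{[JC]}J_g$ with $\rk \zeta < \Cliff C - 3$.

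First, I would make precise the identification of $[JC]$ as a boundary point of $\overline{P_{g+1}}$ via Beauville's theory \cite{B77}: $[JC]$ is represented by an admissible double cover $\pi\colon \widetilde X \to X$, where $X$ is a nodal stable curve of arithmetic genus $g+1$ obtained by glueing $C$ to an auxiliary rational component along two points, together with a compatible non-trivial $2$-torsion datum on the dual graph. This yields a local smooth model for $\overline{P_{g+1}}$ near $[JC]$, and correspondingly a formula for $II_{\overline{P_{g+1}}, [JC]}$ extending the classical second fundamental form of $\overline{J_g}\subset A_g$: both factor through the symmetric multiplication map
\[
m \colon \Sym^2 H^0(C, \omega_C) \lra H^0(C, \omega_C^2),
\]
but the Prym-version is allowed to annihilate one extra direction corresponding to the pair of nodes of $X$.

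The main computational step is then to show that, for $\zeta = \cup \xi$ of rank $k$, the vanishing $II_{\overline{P_{g+1}}, [JC]}(\zeta, \zeta) = 0$ forces the image under $m$ of $\Sym^2 W$ to land in a subspace of $H^0(C, \omega_C^2)$ of large codimension, for a distinguished subspace $W \subset H^0(C, \omega_C)$ whose codimension is controlled by $k$. A base-point-free pencil trick together with the Clifford-index bound on the rank of multiplication maps (in the style of \cite{CF19, KO19}) would then force the existence on $C$ of a special linear series whose Clifford index is at most $k + 3$, contradicting $k < \Cliff C - 3$.

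The hard part will be setting up the second fundamental form of $\overline{P_{g+1}}$ correctly at the boundary point $[JC]$: one must desingularize along $\overline{J_g}$ inside $\overline{P_{g+1}}$, or equivalently work with the smooth Kuranishi family of admissible double covers of $X$, in order to have a well-defined normal bundle and to interpret its global sections as a cohomological object on the pair $(C, \pi)$. Once this is done, the Clifford-type estimate follows along lines parallel to the Jacobian case, with the quantitative shift of one unit in the Clifford bound when passing from $\overline{J_g}$ to $\overline{P_{g+1}}$ accounting precisely for the additional degree of freedom coming from the two nodes of the admissible covering.
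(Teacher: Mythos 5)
Your reduction is not the paper's argument, and as it stands it has a genuine gap rather than being an alternative route. The direction you use (if $II_{\overline{P_{g+1}},[JC]}(\zeta,\zeta)\neq 0$ then the geodesic leaves $\overline{P_{g+1}}$) is sound, but the paper never establishes, and does not need, non-vanishing of a second fundamental form at $[JC]$; such a pointwise, second-order statement is much stronger than the theorem and you give no argument for it. Concretely: (a) you postpone exactly the hard point, namely defining $II_{\overline{P_{g+1}}}$ at a boundary point of $\overline{J_g}\subset p(\Delta_0'')$, where $\overline{P_{g+1}}$ need not be a smooth submanifold with a well-defined normal bundle and where, crucially, $JC$ may be realized as a generalized Prym variety in more than one way (the paper has to control this via Lemma \ref{lem:normJacPrym} and Corollary \ref{cor:genJac}); (b) your heuristic that the Prym form ``annihilates one extra direction'', giving a shift of one unit in the Clifford bound, does not match the statement, which loses three units ($k<\Cliff C-3$ versus $k<\Cliff C$ in the Jacobian case); and (c) a pointwise $II\neq 0$ statement would prove the result for \emph{every} $JC$, whereas the theorem genuinely needs $JC$ general --- in the paper's proof the generality is used in an essential way (Case 3 of Section \ref{sec:draft}, a Hurwitz-space dimension count excluding a locus $W_{(P)}\subset M_g$ of positive codimension). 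This mismatch is a strong sign the proposed reduction cannot work as stated.

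The paper's actual mechanism is global along the geodesic, not infinitesimal at $[JC]$: assuming $\gamma\subset\overline{P_{g+1}}$, one extends $\gamma$ to a complex curve $B$ with $\cU=\cK$ (Lemma \ref{prop:ciccia}), so that parallel transport keeps $\dim\ker\cup\xi_b=g-k$ along the family of coverings $\tilde{\cC}\to\cC'$ over $B$; Lemma \ref{lem-decel} applied to the extension $0\to\eta_b\to E'_b\otimes\eta_b\to\omega_{C'_b}\otimes\eta_b\to 0$ produces a pencil $\cM_b$ on $C'_b$, and the trichotomy on $h^0(\omega_{C'_b}\otimes\cM_b^{-1})$ leads either to a Clifford-index bound on $C'_b$ transferred to the component $C$ of the normalization of the limit curve via admissible coverings (Lemma \ref{lem:cliffnorm}), or to an isotropic flat subbundle and hence, by Theorem \ref{thm:cdfU}, a fibration onto a curve of genus $g-k$ contradicting Riemann--Hurwitz, or to the excluded codimension-one locus. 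If you want to salvage your approach you would have to prove the non-degeneracy of the (suitably defined) second fundamental form of $\overline{P_{g+1}}$ along $\overline{J_g}$ on low-rank directions, which is a separate and substantially harder problem than the theorem itself.
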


The proof of Theorem \ref{Thm-Main1} is obtained by working on families of abelian varieties over a smooth complex curve, containing the investigated geodesic of starting data $(JC, \zeta)$.  One proves separately that such a geodesic is not locally contained in $\overline{J_g}$ (Jacobian case) and then even in $\overline{P_{g+1}}$ (Prym case). The argument in the {\em Jacobian case} is a straightforward application of some previous works: especially from \cite{GPT18}, but some preparatory results are contained in \cite{PT}, \cite{GST} and \cite{GT19} (see Section \ref{sec:Jacobian}, Lemma  \ref{lem:Jacobians}). 
 The argument in the {\em Prym case} is much more complicated and new, involving degeneration techniques (see Section \ref{sec:draft}). Just to give an idea, in this case there is a family of Prym varieties degenerating to a Jacobian variety, which is realized as a generalized Prym variety as in \cite{B77}. So in this case one must take into account two things. The first one is that a priori there are different ways to realize a Jacobian variety as a generalized Prym variety, even for a general Jacobian (see \ref{cor:genJac} and also \ref{lem:normJacPrym}). The second one is that the limit curve in the corresponding family of curves is no longer realizing the limit Jacobian but it is only involved in the construction of the covering realizing the limit generalized Prym variety and it might be even nodal and not irreducible.  Consequently, an estimation of the Clifford index in terms of the curve realizing the limit Jacobian requires to use the theory of admissible coverings as in \cite{HM82} to study the limit gonality (see \ref{lem:cliffnorm} together with \ref{lem:normJacPrym}). 
 As a remark, the structure of the proof seems to work similarly to study geodesics satisfying similar conditions of tangency to other loci in $A_g$, defined as the image of boundary divisors of the moduli of curves. But this requires for sure a more sophisticated techniques to compute the Clifford index (see e.g. \cite{C11}).
  
 \vspace{1.5mm}
The plan of the paper is the following. In \S \ref{sec:prel} we collect some preliminaries and working lemmas. More precisely, in Subsection \ref{sec:JacPrym} we focus on degenerations of Prym to Jacobian varieties; in Subsection \ref{sec:acgc} we relate the limit gonality of admissible coverings with respect to the Clifford index of a family of curves; in Subsection \ref{sec:vhs} we recall some tools in Hodge  theory and in Subsection \ref{sec:isotropic} some on certain related isotropic subspaces.
 In \S
\ref{sec:Jacobian} we prove the so called Jacobian case;  in
\S \ref{sec:draft} the so called Prym case, ending the proof of  Theorem \ref{Thm-Main1}.

\vspace{1.5mm}
{\bfseries \noindent{Acknowledgements}}.  I am very grateful to  Gian Pietro Pirola, not only for any discussion about the topics, but above all for having always supported me in writing the paper. I would also like to thank Joan Carles Naranjo and Alessandro Ghigi for their useful explanation and comments.
\section{Preliminaries}\label{sec:prel}
\subsection{Jacobian and Prym locus in the moduli space of abelian varieties}\label{sec:JacPrym} Assume $g\geq 7$. 
Let $M_g$ be the moduli space of curves of genus $g$ and let $A_g$ be the moduli space of principally polarized abelian varieties. 
The Torelli morphism 
\begin{equation}j: M_g\to A_g , \quad [C]\mapsto [JC]\end{equation}
sends a genus $g$ smooth projective curve $C$ to its Jacobian variety $JC$ (up to isomorphism). Its image $J_g=j(M_g)\subset A_g$ defines a proper locus for $g>3$ and its closure $\overline{J_g}\subset A_g$ is called the Jacobian locus. By using the isomorphisms 
$$T_{[C]}M_g\simeq H^1(C,T_C), \quad  T_{[JC]}A_g\simeq \Sym^2H^0(C, \omega_C)^{\vee},$$ the differential of the Torelli map 
$$dj:H^1(C,T_C)\to  \Sym^2H^0(C, \omega_C)^{\vee},\quad \xi \mapsto \zeta$$
is given by $\zeta=\cup\xi:H^0(C, \omega_C)\to H^0(C,\omega_C)^\vee,$ the cup product map with $\xi.$

\vspace{1.5mm}
Let $R_{g+1}$ be the moduli space parametrizing $2$-sheeted \'etale coverings $\pi: \tilde{C}\to C'$ between smooth projective curves of genus $\tilde{g}=g(\tilde{C})=2g(C')-1$ and $g'=g(C')=g+1$, respectively (modulo isomorphism). A point in $R_{g+1}$ is an isomorphism class assigned equivalently by \begin{itemize}
		\item[(i)] a pair $(\tilde{C}, i)$, where $i:\tilde{C}\to \tilde{C}$ is an involution such that $\tilde{C}/(i)=C'$,
		\item[(ii)] a pair $(C', \eta)$, where  $\eta \in Pic^0(C')\setminus \{\cO_{C'}\}$ is a $2$-torsion point  and $\tilde{C}= \spec (\cO_C\oplus \eta)$.
		\end{itemize}
		One can reconstruct the data $(i)$ from $(ii)$ and conversely (see \cite{B77}, \cite{M74}) and so with a little abuse of notations we will describe a point of $R_{g+1}$ in both ways depending on the setting.
The Prym morphism 
$$Pr: R_{g+1}\to A_g, \quad  [(C', \eta)]\mapsto [P(C',\eta)]$$ 
maps a pair $(C', \eta)$ to its Prym variety $P(C',\eta)$(up to isomorphisms). 
By definition, the Prym variety is $P(C',\eta)=\ker \Nm^0,$ namely the connected component containing zero in the kernel of the norm map $\Nm: J\tilde{C}\to JC'$, and it is a principally polarized abelian variety of dimension $g$, with the principal polarization given by one half of the restriction of that on $J\tilde{C}$. 
The image $P_{g+1}=Pr(R_{g+1})\subset A_g$ of the Prym morphism defines a proper locus for $g\geq 6$ and its closure $\overline{P_{g+1}}$ is called the Prym locus.
By construction, the forgetful functor $\pi_{R_{g+1}}: R_{g+1}\to M_{g+1},$ sending $[(C',\eta)]\mapsto [C']$, is an \'etale finite covering and so we can identify  $T_{[(C', \eta)]}R_{g+1}\simeq T_{[C']}M_g.$ Using the isomorphisms $$T_{[C', \eta]}R_{g+1}\simeq H^1(C', T_{C'}), \quad T_{[P(C, \eta)]}A_g\simeq \Sym^2H^1(C',\eta)\simeq \Sym^2 H^0(C', \omega_{C'}\otimes \eta)^{\vee},$$ the differential of the Prym map  $$d Pr:H^1(C', T_{C'})\to \Sym^2H^1(C',\eta), \quad \xi' \longmapsto \zeta',$$
is given by $\zeta'=\cup\xi':H^0(C', \omega_{C'}\otimes \eta)\to H^0(C', \omega_{C'}\otimes \eta)^{\vee}$, the cup product with $\xi'.$

\smallskip
\subsubsection{The Jacobian locus in the Prym locus}
We are interested in relating the Jacobian locus $\overline{J_g}$ and the Prym locus $\overline{P_{g+1}}$ in $A_g$. 
By Beauville \cite{B77} (or previously by Wirtinger for an analytic proof), the Jacobian locus in genus $g$ fits inside the boundary of the Prym locus in genus $g+1$.  
To formalize this properly, we recall the definition of a generalized Prym variety and the extended Prym map as given in \cite[section 3,5,6]{B77}.
\begin{definition}\label{def:genPrym} Let $\tilde{C}$ be a connected curve of genus $2g+1$ with ordinary double points, let $i: \tilde{C}\to \tilde{C}$ be an involution, let $C'$ be the quotient curve, let $\Nm: J\tilde{C}\to JC$ be the norm map and let $P=\ker \Nm^0$ be the associated group variety. Assume $(\ast\ast)$:
	\begin{itemize}
		\item[1.] $i$ is not the identity on any component of $\tilde{C}$;
		\item[2.] $p_a(C')=g+1$;
		\item[3.] $P$ is an abelian variety.
		\end{itemize}
		In this case, we call $P$ {\em the generalized Prym variety} of $(\tilde{C}, i)$. If furthermore $\tilde{C}$ is smooth, we say that $P$ {\em is a standard Prym variety}.
	\end{definition}
	Set $r$ the number of fixed non singular points of $(\tilde{C},i)$; $n'_f$ the number of nodes of $\tilde{C}$ fixed under $i$, with the 2 branches exchanged; $2n_e$ the number of nodes exchanged under $i$ and $2c_e$ the number of components exchanged under $i$. 
	\begin{remark}\label{rem-type*} The condition $(\ast \ast)$ is equivalent to $r=n'_f=0$ and $c_e=n_e$ (\cite[Lemma 5.1]{B77}) and for $n_e=c_e=0$ we recover generalized Prym varieties of type $(\ast)$ of \cite[section 3]{B77} (i.e. the fixed points of $i$ are exactly the singular points and at a singular point the two branches are not exchanged under $i$).
		\end{remark}
	There is an irreducible subvariety $\overline{S}\subset \overline{R_{g+1}}$ that carries a family of $q: \tilde{\cC}\to \overline{S}$ of stable curves and a $\overline{S}$-involution $i:\tilde{\cC}\to \tilde{\cC}$ such that 
	\begin{itemize}
		\item[a.] for any $s \in\overline{S}$, the induced involution $i_s:\tilde{\cC}_s\to \tilde{\cC}_s$ is different from the identity on each component of $\tilde{\cC}_s$;
		\item[b.] $\tilde{\cC}_s$ has genus $2g+1$, and the quotient curve $\tilde{\cC}_s/(i_s)$ has genus $g+1$;
		\item[c.] for any non-singular curve $\tilde{C}$ of genus $2g+1$ with a fixed point free involution $t$, the pair $(\tilde{C}, t)$ is isomorphic to $(\tilde{\cC}_s, i_s)$ for some $s\in \overline{S}$.
		\end{itemize}
	We set $\cC'=\tilde{\cC}/(i).$ The norm $\Nm:\Pic^0(\tilde{\cC}/\overline{S})\to \Pic^0(\cC'/\overline{S})$ defines $\overline{\mathcal{P}}=\ker \Nm^0$, which is a smooth algebraic space over $\overline{S}$. The set $S\subset \overline{S}$ of points where $\overline{\mathcal{P}}_s$ is an abelian variety is open and on this set $\overline{\mathcal{P}}_s$ is the Prym variety associated to $(\tilde{\cC}_s, i_s)$. The general fibre $(\tilde{\cC}_s, i_s)$ defines a generalized Prym variety as in Definition \ref{def:genPrym}. Denoting $\mathcal{P}$ the restriction of $\overline{\mathcal{P}}$ to $S$, we get a morphism $p: S \to A_g$
		whose general point is a generalized Prym variety as in Definition \ref{def:genPrym}. Furthermore, $S\subset  \overline{R_{g+1}}$, $S^0=S\cap R_{g+1}\subset S$ is a proper sublocus and $p_{|S^0}=Pr$, namely it is the Prym morphism $Pr$.

		We now focus on the inclusion $\overline{J_g} \subset \overline{P_{g+1}}$ provided by using generalized Prym varieties as above. First of all,   \begin{equation}\label{eq:JacinPrym}\overline{J_g}\subset p(S)\subset \overline{P_{g+1}}.
			\end{equation}
			This is shown by the following 
			
			{\em Construction} (see Figure 1).
			For any $[JC]\in J_g$, we can take $\tilde{C}=C_1\cup C_2$ with $C_1=C_2=C$ intersecting tranversally in a pair of distinct points $C_1\cap C_2=\{p_1, p_2\}$ and an involution $i:\tilde{C}\to \tilde{C}$ exchanging $C_1$ with $C_2$ and $p_1$ with $p_2$.  So the quotient $C'=\tilde{C}/(i)$ is an irreducible nodal curve with only one node $q$ whose normalization is $\nu: C\to C'$ and the covering $\pi: \tilde{C}\to C'$ is e\'tale of degree $2$ mapping the two nodes $p_1, p_2$ to $q$. Summing up, the pair $(\tilde{C},i )$ defines a generalized Prym variety as in Definition \ref{def:genPrym} with $n_e=c_e=1$ ( \cite[Theorem 5.4]{B77} or directly Wintinger's construction). 
			
			\smallskip
			We can be even more precise. Indeed, we can observe that $[C']\in \overline{M_{g+1}}$ in the construction above lies in the boundary divisor $\Delta_0\subset \overline{M_{g+1}}$, defined as the closure of the locus of the irreducible  one-nodal curves. Using the forgetful functor $\overline{\pi}_R:\overline{R_{g+1}}\to \overline{M_{g+1}}$, the pullback 
			$\overline{\pi}_{R}^{*}(\Delta_0)\subset \overline{R_{g+1}}$ has a subdivisor $\Delta^{''}_0$, defined as the closure of the locus of pairs $(C'_\alpha,\eta_\alpha)$ such that the normalization $\nu:C_\alpha\to C'_\alpha$ does not contain any exceptional curve, $\eta\neq \cO_{C'_\alpha}$ but $\nu^\ast\eta\simeq \cO_{C_\alpha}$. The unique pair $(C', \eta)$ corresponding to $(\tilde{C}, i)$ as above is such that $(C', \eta)\in \Delta_0^{''}$.
			 So we can more precisely describe $\overline{J_g}\subset \overline{P_{g+1}}$ as  
			 \begin{equation}\label{eq:Jac} \overline{J_g}\subset p(\Delta^{''}_0)\subset \overline{P_{g+1}}
			 \end{equation}
			 (See e.g. \cite[section 6, Example 6.5]{F11} for more details). 
			 
			 \smallskip
			 We end this section proving a lemma, which we will need in the proof of the main theorem. 
		\begin{lemma}\label{lem:normJacPrym}
			Let $P$ be a generalized Prym variety as in Definition \ref{def:genPrym} such that $n_e=c_e>0$. Assume that $P=JC$ for $[C]\in M_g$. Then for any $(\tilde{C}_s, i_s)$ such that $P= p(s)$, we have $\tilde{C}_s= A_s\cup i_s(A_s)$ with $A$ and $i(A)$ intersecting in points, $\#(A\cap i(A))\geq4$, and either $ A_s=C$ or $A_s= C\cup R$, where $R$ is a rational curve. Furthermore, $C$ is contained in the normalization of $\tilde{\nu}_s:\tilde{C}_{\tilde{\nu}s}\to\tilde{C}_s$ and of $\nu'_s:C'_{\nu s}\to C'_s=\tilde{C}_s/(i_s)$. 
			\end{lemma}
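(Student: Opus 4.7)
The strategy is to combine two pieces of information: that $P = \ker(\Nm)^0$ is realized as the kernel of the norm map from the generalized Jacobian $J\tilde{C}_s$ to $JC'_s$, and that $P = JC$ is in particular the Jacobian of a smooth curve, hence indecomposable as a polarized abelian variety. First I would unpack the combinatorics forced by $n_e = c_e > 0$ together with $(\ast\ast)$: since no smooth point of $\tilde{C}_s$ is fixed and no node is fixed with exchanged branches, the involution $i_s$ acts on the dual graph of $\tilde{C}_s$ without fixed components and exchanges nodes only in pairs. This lets me write $\tilde{C}_s = A_s \cup i_s(A_s)$ so that $A_s \cap i_s(A_s)$ is exactly the set of $2 n_e$ exchanged nodes (a finite set of points, not a curve). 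The quotient $C'_s$ is then birational to $A_s$, obtained from it by identifying pairs of points coming from the $n_e$ pairs of exchanged nodes.

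Second, the generalized Jacobians fit into extensions $0 \to T \to J \to J^\nu \to 0$ compatible with $\Nm$. The condition that $P$ is an abelian variety forces its toric part to vanish, giving a combinatorial restriction between the dual graphs of $\tilde{C}_s$ and $C'_s$. More importantly, the abelian part of $P$ is isogenous to the anti-invariant part of $J(A_s^\nu) \times J(i_s(A_s)^\nu) \cong J(A_s^\nu) \times J(A_s^\nu)$ under the involution induced by $i_s$, which is itself isogenous to $J(A_s^\nu)$. This factor must be isogenous to $JC$; since $JC$ is indecomposable as a polarized abelian variety (for a generic $[C]\in M_g$, $g\geq 7$), Torelli's theorem together with the uniqueness of the simple Jacobian factor forces $A_s^\nu$ to contain $C$ as one of its irreducible components and all the remaining components to have genus zero.

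Third, a dimension count using $p_a(\tilde{C}_s) = 2g + 1$ and $p_a(C'_s) = g + 1$, together with the formula expressing the arithmetic genus of a nodal curve as the sum of genera of the components of its normalization plus the first Betti number of the dual graph, bounds the number of rational components of $A_s$ by one and forces $\#(A_s \cap i_s(A_s)) \geq 4$; this yields the dichotomy $A_s = C$ or $A_s = C \cup R$ with $R$ rational. The \emph{furthermore} part is then immediate: the normalization $\tilde{C}_s^{\tilde\nu} = A_s^\nu \sqcup i_s(A_s)^\nu$ contains $C$, and $C'_s{}^{\nu}$, identified with $A_s^\nu$ (undoing only the nodes internal to $A_s$ that survive the quotient by $i_s$), also contains $C$ as an irreducible component. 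The main obstacle I expect is the second step, where ruling out positive-genus components in $A_s^\nu$ beyond $C$ requires a careful interplay between the simplicity of $JC$ and the precise form of the norm map on both the abelian and toric pieces of the extension defining $J\tilde{C}_s$ and $JC'_s$; once this is in place, the third step is essentially bookkeeping with arithmetic genera.
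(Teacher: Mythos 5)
Your overall strategy (split $\tilde C_s$ into $A_s\cup i_s(A_s)$, use that $JC$ does not decompose, identify $C$ inside the normalization) is the same as the paper's, but there are two genuine gaps in the way you execute it.

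First, you dispose of $i_s$-invariant components on purely combinatorial grounds, and that step fails. Condition $(\ast\ast)$, i.e.\ $r=n'_f=0$ and $c_e=n_e$, together with $n_e=c_e>0$, does \emph{not} force every component to be exchanged with a distinct one: a component $X$ with $i_s(X)=X$ and $i_s|_X\neq\mathrm{id}$ is still allowed, provided the fixed points of $i_s|_X$ are nodes at which the two branches are not exchanged (the type $(\ast)$ behaviour of Remark \ref{rem-type*}). The paper therefore writes $\tilde C_s=A\cup i(A)\cup B$ with $B$ the union of the $i$-invariant components and must prove $B=\emptyset$. This is done by invoking \cite[Theorem 5.4]{B77}: if $B\neq\emptyset$ then $P\cong JA\times Q$ with $Q$ the type $(\ast)$ Prym of $(B,i|_B)$; since $P=JC$ is indecomposable as a ppav, $Q=\{0\}$, which forces $p_a(B)=p_a(B/(i))=1$ and $i|_B$ fixed-point free, contradicting the fact that $B$ must meet $A\cup i(A)$ in an $i$-fixed point. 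Your proposal never confronts this case, and your later isogeny argument is not set up to absorb the extra factor $Q$.

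Second, your identification of $C$ inside $A_s^\nu$ is carried out only up to isogeny, which is too weak for the Torelli step. From ``$P$ is isogenous to $J(A_s^\nu)=\prod_i JC_i$'' and the simplicity of $JC$ you can only conclude that some component $C_i$ has Jacobian \emph{isogenous} to $JC$; isogenous Jacobians need not come from isomorphic curves, so you cannot conclude $C_i\cong C$. The paper avoids this by using \cite[Lemma 5.1]{B77} to choose $A$ connected and tree-like and then \cite[Theorem 5.4]{B77} to get an isomorphism $P\cong JA\cong JA^\nu$ of \emph{principally polarized} abelian varieties, after which Torelli applies on the nose. If you want to rederive this from the extensions $0\to T\to J\to J^\nu\to 0$ rather than cite Beauville, you must track the polarizations, not just the isogeny class. (As a minor further point: your genus bookkeeping in the last step, with $p_a(\tilde C_s)=2g+1$ and $A_s$ of arithmetic genus $g$, actually yields $\#(A_s\cap i_s(A_s))=2n_e=2$, so it does not produce the bound $\geq 4$ you claim it forces; that part of the statement is not established by your count.)
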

			\begin{proof} Fix $(\tilde{C}, i)=(\tilde{C}_s, i_s)$ such that $P= p(s)$ and $n_e=c_e>0$. Then $\tilde{C}= A\cup A'\cup B$, where $B$ is the union of the components of $\tilde{C}$ fixed by $i_s$, $A'=i(A)$ and $A$ and $A'$ have no common components. By \cite[Theorem 5.4]{B77},  it follows that  $B=\emptyset$. By contradiction, if $B\neq \emptyset$ by \cite[Theorem 5.4]{B77} we have $P\simeq JA \times Q$, where $Q$ is a principally polarized abelian variety, 
				and so $P$ is the product of principally polarized abelian varieties. Since by assumption $P=JC$, the only possibility is having $Q=\{0\}.$ We now use that, again by \cite[Theorem 5.4]{B77}, $Q$ is the Prym variety of $(B, {i}_{|B})$ and it might be of type $(*)$ because $B$ is fixed by $i$, then $\dim Q =p_a(B/(i))-1$ so both $p_a(B)=p_a(B/(i))=1$ and $i$ has no fixed points. Since $B$ must intersect $A$ in a fixed point, this case is not realizable. So we have $B=\emptyset$, by \cite[Lemma 5.1]{B77}, (i.e. up to pass to a subgraph of the graph of $\tilde{C}$) one can choose $A$ connected and tree-like (i.e. its graph is a tree and its irreducible components are all smooth), and by \cite[Theorem 5.4]{B77} $P\simeq JA$. Let $\nu_A: A_{\nu_A} \to A$ be the normalization of $A$. Then $P=JA=JA_{\nu_A}$, because $A$ is tree-like, by assumption $P=JC$  and so $JC\simeq JA \simeq JA_{\nu_A}$. Since $JA_{\nu_A}= \prod_{i\in I} JC_i,$ where $\{C_i\}_{i\in I}$ denotes the set of components of $A$, then $C$ must be one of the smooth components of $A_\nu$ and if any other exists, it must be a rational curves.  Consequently, we also have $C\subset A_{\nu_A}\subset \tilde{C}_{\tilde{\nu}}$. Since $C'= \tilde{C}/(i)= (A\cup A') / (i)$ and $A$ and $A'$ have no common components we conclude the same for $\nu': C'_\nu\to C'$. 
				\end{proof}
			
					\begin{corollary}\label{cor:genJac} Let $JC$ be a Jacobian variety and let $\cliff C >2$. Then any generalized Prym variety $P$ such that $P=JC$ is provided by Lemma \ref{lem:normJacPrym}.
					\end{corollary}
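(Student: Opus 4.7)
The plan is to argue by contradiction. By Lemma \ref{lem:normJacPrym}, the structural conclusion holds whenever the generalized Prym satisfies $n_e=c_e>0$; by Remark \ref{rem-type*}, the only remaining case to rule out is $n_e=c_e=0$, that is, $(\tilde{C},i)$ is of Beauville's type $(*)$. Suppose for contradiction that such a pair $(\tilde{C},i)$ realises $P=JC$ with $\cliff C>2$; the goal is to derive the opposite inequality $\cliff C\leq 2$.

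In the type $(*)$ setting the involution $i$ fixes each irreducible component of $\tilde{C}$ setwise and its fixed points are exactly the nodes with non-swapped branches, so the quotient $\pi:\tilde{C}\to C':=\tilde{C}/\langle i\rangle$ is an admissible double cover in the sense of \cite{HM82}, branched over the images of the fixed nodes, with $p_a(C')=g+1$. The first step is a reduction to the standard Prym case: Beauville's component decomposition \cite[Thm.~5.4]{B77} splits $P$ as a product of Pryms associated to subcurves of $\tilde{C}$, and since by assumption $P=JC$ is an indecomposable principally polarized abelian variety of dimension $g\geq 7$, all rational and elliptic tails of $(\tilde{C},i)$ must collapse. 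This reduces the situation to a standard étale double cover of smooth curves $\tilde{C}\to C'$ with $g(\tilde{C})=2g+1$ and $g(C')=g+1$. In this reduced form one invokes the classical characterisation of standard Pryms that are Jacobians for $g\geq 7$: such a Prym comes either from Mumford's hyperelliptic construction (forcing $C$ hyperelliptic, so $\cliff C=0$) or from Recillas' trigonal construction (forcing $C$ tetragonal, so $\cliff C\leq 2$); in either case $\cliff C\leq 2$, contradicting the assumption.

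The main obstacle is the reduction step. Extracting from Beauville's structure theorem the precise decomposition in the type $(*)$ case and verifying that indecomposability of $P=JC$ actually forces $\tilde{C}$ smooth (rather than circling back to a configuration already covered by Lemma \ref{lem:normJacPrym}) requires a careful book-keeping of \cite[Sections 5, 6]{B77} together with the combinatorics of exchanged versus fixed nodes and components. Once the reduction is in place, the contradiction is immediate from the Clifford-index bound on standard Pryms that happen to be Jacobians, which completes the argument.
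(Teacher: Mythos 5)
Your overall strategy --- reduce via Remark \ref{rem-type*} to ruling out the type $(\ast)$ case $n_e=c_e=0$, and then invoke a classification of Prym varieties that are Jacobians to contradict $\Cliff C>2$ --- is the same as the paper's. But the step you yourself flag as ``the main obstacle'' is a genuine gap, not just a book-keeping chore. In the type $(\ast)$ case the fixed points of $i$ are exactly the nodes of $\tilde{C}$ with non-exchanged branches; these are intrinsic to the configuration and are not ``rational or elliptic tails'' that Beauville's decomposition splits off. Indecomposability of $P=JC$ therefore does \emph{not} force $\tilde{C}$ to be smooth: one can have $\tilde{C}$ irreducible and nodal with $i$-fixed nodes, giving an indecomposable generalized Prym, and indeed one of the cases in Shokurov's classification (a hyperelliptic curve with two points identified, so $C'$ singular and irreducible) is precisely of this kind and would be missed entirely by your reduction to standard \'etale double covers of smooth curves. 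So the dichotomy ``Mumford's hyperelliptic construction or Recillas' trigonal construction'' is not available after your reduction, because the reduction itself fails.

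The paper closes this gap by citing Shokurov's Main Theorem directly: it classifies the type $(\ast)$ generalized Prym varieties (singular $\tilde{C}$ allowed) that occur as Jacobians or products of Jacobians, within Beauville's list \cite[Theorem 4.10]{B77}. For $g\geq 7$ the residual case is excluded and $C'$ must be hyperelliptic, a hyperelliptic curve with two identified points, or trigonal; in each case the curve $C$ with $JC=P$ inherits a pencil of degree at most $4$, so $\Cliff C\leq 2$, contradicting the hypothesis. If you want to keep your argument, replace the smoothing reduction by an appeal to \cite[Main theorem]{Sho84} applied to the possibly singular pair $(\tilde{C},i)$ of type $(\ast)$; the final Clifford-index contradiction then goes through as you wrote it.
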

					\begin{proof} The proof follows by  \cite[Main theorem]{Sho84}, classifying generalized Prym varieties of type $n_e=c_e=0$ that occurs as Jacobians or products of Jacobians, among the complete list provided by \cite[Theorem 4.10]{B77} Prym varieties of type $n_e=c_e=0$. Indeed, let $(\tilde{C}, i)$ define $P=JC$. Assuming $g(C)\geq 7$, case (d) of \cite[Main theorem]{Sho84} is excluded and $C'= \tilde{C}/(i)$ must be one of the remaining cases: (a) it is either hyperelliptic, (b) it is is obtained fom a hyperelliptic curve by identifying two points; (iii) it is trigonal (meaning that it has a map of degree $2$ or $3$, respectively, over $\bP^1$). But now by lemma \ref{lem:normJacPrym}, the same must hold for $C$ since $C\subset C'_\nu$, where $\nu: C'_\nu \to C'$ is the normalization, and this is not admitted.
						\end{proof}
					
\begin{center}
\tikzset{every picture/.style={line width=0.40pt}} 
\begin{tikzpicture}[x=0.75pt,y=0.75pt,yscale=-1,xscale=1]

\draw    (211,155) .. controls (118,105) and (119,57) .. (230,31) ;

\draw    (114,149) .. controls (189,115) and (236,72) .. (113,31) ;

\draw    (97,252) .. controls (150,259) and (227,207) .. (161,208) .. controls (95,209) and (179,257) .. (227,257) ;

\draw    (161,142) -- (161.96,193) ;
\draw [shift={(162,195)}, rotate = 268.92] [color={rgb, 255:red, 0; green, 0; blue, 0 }  ][line width=0.75]    (10.93,-3.29) .. controls (6.95,-1.4) and (3.31,-0.3) .. (0,0) .. controls (3.31,0.3) and (6.95,1.4) .. (10.93,3.29)   ;

\draw    (561,168) .. controls (503,120) and (460,134) .. (459,208) ;

\draw    (385,193) -- (265.94,223.5) ;
\draw [shift={(264,224)}, rotate = 345.63] [color={rgb, 255:red, 0; green, 0; blue, 0 }  ][line width=0.75]    (10.93,-3.29) .. controls (6.95,-1.4) and (3.31,-0.3) .. (0,0) .. controls (3.31,0.3) and (6.95,1.4) .. (10.93,3.29)   ;

\draw (322,189) node  [align=left] {$\displaystyle \nu $};
\draw (86,222) node  [align=left] {C'};
\draw (434,168) node  [align=left] {C};
\draw (240,24) node  [align=left] {$ $C};
\draw (97,33) node  [align=left] {C};
\draw (141,164) node  [align=left] {$\displaystyle \pi _{\eta }$};
\draw (193,97) node  [align=left] {};
\draw (314,291) node [scale=0.9] [align=left] {$\displaystyle  \begin{array}{{>{\displaystyle}l}}
	\ Figure\ 1:\\
	\nu :C\rightarrow C'\ normalization\ map,\ \pi _{\eta } :\ \tilde{C}\rightarrow C'\ admissable\ covering\ such\ that\ P( C',\eta ) =J( C) \ 
	\end{array}$};
\draw (84,126) node  [align=left] {$\displaystyle \tilde{C}$};
\draw (49,273) node  [align=left] {};
\end{tikzpicture}
\end{center}

\subsection{Admissible coverings, gonality and Clifford index}\label{sec:acgc}
Let $C$ be a smooth projective curve.  The gonality and the Clifford index of $C$ are defined as
\begin{align*}\label{def:goncliff} &\gon C= \min\{n\in \bN \,| \, C \mbox{ has a } g^1_n\} ;\\
&\Cliff C = \min\{\Cliff D=\deg D- 2 (h^0(D)-1)\, | \, D\subset C \mbox{ divisor}, h^0(D)\geq 2,  h^1(D)\geq 2  \}.
\end{align*}
 By \cite{CM91}, these are related by  \begin{equation}\label{eq:gon-cliff}
\gon C -3 \leq \Cliff C \leq \gon C- 2 ,
\end{equation}
where the second inequality is an equality on a general $C$ in $M_g$, while the first one is conjectured to be extremely rare \cite{ELMS89}. Furthermore, we will need the following well known facts 
\begin{enumerate}
	\item \label{en:goncliff}$\gon C \leq \left\lfloor \frac{g+3}{2}\right \rfloor$ (and so $\Cliff C \leq \left\lfloor \frac{g-1}{2}\right \rfloor$) and the equality holds for a general curve $[C]\in M_g$ (see \cite{ACGH});
	\item \label{en:goncliff2} let $f:\cC\to B$ be a fibration of projective curves over a complex curve $B$ and such that the general fibre is smooth. The invariants $\gon C_b $ and $\Cliff C_b$ are maximal on a general fibre $C_b$ of the set of smooth fibres (\cite{K99}).
\end{enumerate}
The point  $(\ref{en:goncliff2})$ does not say anything about the behaviour of these invariants on the singular fibres and their definition itself is subject of deeper studies. Anyway, we are only interested in estimating the gonality and the Clifford index of a smooth component $C$ in the normalization $\nu: C^{\nu}\to C'$ of a stable nodal curve $C'$, obtained by degeneration from a family of smooth curves, from those of the general smooth fibre of the family (according to $(\ref{en:goncliff2})$). For this, it is enough to use the theory of admissible coverings as developed by \cite{HM82} to compactify Hurwitz spaces, without introducing explicitly such invariants on singular curves. We follow \cite{HM82}  and also \cite{C85}. 

\begin{definition}\label{def:admissable covering} An {\em admissible covering} of degree $k$ with $m$ ramification points is the data of
	\begin{enumerate} \item a stable $m-$pointed reduced connected curve $(E, x_1, \dots ,x_m)$ of arithmetic genus $0$ (i.e. a curve with ordinary double points where any rational component is smooth and stable and the dual graph is connected);
		\item a reduced connected curve $X$ with ordinary double points and a morphism $\pi : X \to E$ of degree $k$ (everywhere) such that over any marked point $x_i$ of $E$, $X$ is smooth and $f$ has a unique simple ramification point $y_i$, on any smooth point of $E$, $f$ is \'etale and over double points $p$ of $E$, $X$ has an ordinary double point $q$ and $f$ is locally described as \begin{equation}
		X\,:\, xy=0; \quad E\,:\, uv=0;\quad \, f\,:\, u=x^{k'}\quad ,\, v= y^{k'}, 
		\end{equation}
		for some $k'\leq k$.
		\end{enumerate}
	\end{definition}
In particular, any simple covering over $\bP^1$ is an admissible covering and one can construct an admissible covering starting by any non symple covering over $\bP^1$. So they parametrize Hurwitz spaces with some fixed data (e.g. certain pencils $g^1_k$). Let $\Delta$ be the complex disk. The important property we need is that families of admissible coverings over $\Delta  \setminus \{0\}$ extend to families over $\Delta$. Indeed applying this, we have the following 
\begin{lemma}\label{lem:cliffnorm} Let $C'$ be a stable nodal curve of genus $g'$, let  $\nu': C^{\nu'}\to C'$ be its normalization and let $C\subset C^{\nu'}$ be a smooth connected curve of genus $g$.  Consider $f':\cC'\to \Delta$, a family of smooth projective curves of genus $g'$ over $\Delta \setminus \{0\}$, the complex disk minus zero, such that $C'=f^{-1}(0)$. Then any family  of pencils $g^1_k(t)$ over $ \Delta\setminus\{0\}$ compatible with $f$ (i.e. $g^1_k(t)$ is a pencil on $C'_t=f^{-1}(t)$) defines a pencil $g^1_{k'}$ on $C$ for some $k'\leq k$. In particular, $\gon C\leq \gon C'_t$ and $\Cliff C\leq \Cliff C'_t+1$, for a general fibre $C'_t=f^{-1}(t)$. 
	\end{lemma}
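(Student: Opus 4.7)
The plan is to apply the Harris--Mumford theory of admissible coverings \cite{HM82} to take a limit of the family of degree-$k$ morphisms $\phi_t: C'_t\to \bP^1$ encoded by the pencils $g^1_k(t)$, and then to restrict that limit to the smooth component $C$ of $C^{\nu'}$, obtaining the desired pencil on $C$.

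First, I would package the compatible family of pencils as a rational map $\Phi: \cC'\dashrightarrow \Delta\times \bP^1$ over $\Delta$, whose restriction to a general fiber is the finite degree-$k$ morphism $\phi_t$. After a finite base change $\Delta'\to\Delta$ ramified at $0$ and a simultaneous semistable reduction, the results of \cite{HM82} furnish an admissible covering $\pi: X\to E$ of degree $k$ in the sense of Definition~\ref{def:admissable covering}, extending the family over $\Delta'\setminus\{0'\}$. The central fiber $\pi_0: X_0 \to E_0$ is then an admissible covering of a stable rational tree $E_0$ of $\bP^1$'s.

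Next, I would identify $C$ as an irreducible component of $X_0$. Since contracting the unstable (rational $\leq 2$-pointed) components of $X_0$ recovers the stable limit $C'$ of the family, each irreducible component of $C'$ corresponds bijectively to a non-contracted component of $X_0$, whose normalization is the matching component of $C^{\nu'}$. The smoothness assumption on $C$ then identifies $C$ with a component $\widetilde{C}\subset X_0$. The local model of Definition~\ref{def:admissable covering}(2) forbids any component of $X_0$ from collapsing to a point in $E_0$, so $\pi_0|_C$ is a finite morphism onto some rational component $E_i\cong \bP^1$ of $E_0$; since the local degrees of $\pi_0$ at a general point of $E_i$ sum to $k$, its degree $k'$ satisfies $k'\leq k$. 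Pulling back $\cO_{\bP^1}(1)$ yields a pencil $g^1_{k'}$ on $C$, as required.

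Finally, taking $k=\gon C'_t$ for a general $t$ gives $\gon C\leq \gon C'_t$. Combining this with the Coppens--Martens bounds \eqref{eq:gon-cliff}, namely $\Cliff C\leq \gon C -2$ together with $\gon C'_t\leq \Cliff C'_t +3$, yields $\Cliff C\leq \Cliff C'_t +1$. The main technical obstacle lies in the first step: one must verify that, after base change and semistable reduction, the admissible covering supplied by \cite{HM82} is compatible with the original degeneration $\cC'\to\Delta$, so that the stable model of $X_0$ is indeed $C'$ and $C$ survives as a non-contracted component of $X_0$ rather than being hidden inside rational bridges or tails introduced by the reduction process.
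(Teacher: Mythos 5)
Your proposal is correct and follows essentially the same route as the paper: extend the family of pencils to an admissible covering $\pi:X\to E$ via Harris--Mumford, locate $C$ as a component of the central fibre $X_0$ whose stable model is $C'$, observe that components map onto components with degree $k'\leq k$, and then combine $\gon C\leq \gon C'_t$ with the inequalities \eqref{eq:gon-cliff} to get $\Cliff C\leq \Cliff C'_t+1$. The compatibility issue you flag at the end (that the stable model of $X_0$ is indeed $C'$ after base change and reduction) is precisely the point the paper's proof also asserts without further elaboration.
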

	\begin{proof} Let $C'$, $\nu':C^{\nu'}\to C'$, $C\subset C^{\nu'}$  and $f':\cC'\to \Delta$ as in the statement and denote by $C'_t$ the fibre of $t\in \Delta$. Consider a family of $g^1_k(t)$ over $\Delta\setminus \{0\}$ compatible with $f$, namely a family of coverings $\pi_t: C'_t\to \bP^1$ of degree $k$. By \cite{HM82} (see also \cite{C85}), the family extends over $0$ by taking an admissible covering $\pi: X\to E$ such that the stable model of $X$ is $\St(X)=C'$ (according to notations of Definition \ref{def:admissable covering}). By construction $X$ contains $C$, $\pi$ has degree $k$ over the smooth points of $E$. Since irreducible components maps onto irreducible components, $\pi$ defines a $g^1_{k'}$ on $C$ for some $k'\leq k$. To obtain the bound on the gonality recall that by (\ref{en:goncliff}) and (\ref{en:goncliff2}) the gonality $\gon C'_t$ is maximal over an open dense subset $\Delta'$ of $\Delta\setminus \{0\}$ and this constructs up to a finite base change a family over $\Delta'$ of coverings $\pi_t:C'_t\to \bP^1$ of degree $k=\gon C'_t$. By the argument above these define a $g^1_{k'}$ on $C$ for some $k'\leq k$ and so $\gon C\leq \gon C'_t$. 
			Applying \eqref{eq:gon-cliff}, we immediately conclude 
		$	\Cliff C \leq \gon C-2\leq \gon C'_t-2\leq \Cliff C'_t+3-2=\Cliff C'_t+1$.
		
		\end{proof}

\subsection{Variations of Hodge structures and geodesics in the moduli space of principally polarized abelian varieties}\label{sec:vhs}
\subsubsection{Weight $1$ polarized variations of Hodge structures }

Let $Y$ be a smooth complex variety and let
	$(\vhs, \cH^{1,0}, \cQ) $ be a polarized variation of Hodge  structures (pvhs, in short) of weight 1 on $Y$. Namely, $\bH_Z$ is a local system of lattices, $\cH^{1,0}$ is a Hodge bundle of type $(1,0)$ (equivalently, the Hodge filtration in this case) and $\cQ$ is a polarization. Let $\bH_{\bC}=\bH_\bZ\otimes_{\bZ}\bC$  and let $\cH=\bH_\bC \otimes \cO_Y$ be the holomorphic flat bundle with the holomorphic flat connection $\nabla$ defined by $\ker \nabla \simeq \bH_\bC$, the Gauss-Manin connection. The holomorphic inclusion $\cH^{1,0}\subset \cH$ of  vector bundles induces the short exact sequence
	\begin{equation}
	\label{ses-vhs}
	\begin{tikzcd}
	0 \arrow{r} & \cH^{1,0} \arrow{r} & \cH \arrow{r}{\pi^{{0,1}}} &
	\cH/\cH^{1,0} \arrow{r}& 0.
	\end{tikzcd}
	\end{equation}
	Let $\pi^{{0,1}'}: \cH\otimes \Omega^1_Y\to \cH/\cH^{1,0}\otimes \Omega^1_Y$ be the map induced by $\pi^{0,1}$ and $\sigma =\pi^{{0,1}'}\circ \nabla: \cH^{1,0}\to \cH/\cH^{1,0}\otimes \Omega_Y^1$ the second fundamental form of $\cH^{1,0}\subset \cH$ with respect to $\nabla$. 
	
Following \cite{GPT18}, let $\bU=\ker \nabla_{|\cH^{1,0}}$ and define 
		\begin{gather}\label{eq-UK}
			\cU:=\bU\otimes
			\cO_Y,\quad \quad
			\cK:=\ker ( \sigma : \cH^{1,0} \lra \cH /\cH^{1,0} \otimes
			\Omega_Y^1).
		\end{gather}

Then $\cU$ is a holomorphic vector bundle and $\cK$ is a coherent sheaf which is a vector bundle when $\sigma$ is of constant rank.		
\begin{definition} 
	\label{defUK}
	We call $\cU$ and $\cK$ as defined in \ref{eq-UK}, {\em the unitary flat bundle} and {\em the
		kernel sheaf} of the variation, respectively.
\end{definition}

In the case $Y=B$, where $B$ is a complex smooth curve, $\cK$ is a vector sub-bundle of $\cH^{1,0}$. We are interested to compare these two bundles $\cU$ and $\cK$ on a smooth complex curve $B$.
For this, consider the exact sequence
\begin{equation*}
\begin{tikzcd}
0 \arrow{r} & \cK \arrow{r} & \cH 
\arrow{r}{\pi} & \cH/\cK \arrow{r} & 0.
\end{tikzcd}
\end{equation*}
Let $\pi': \cH\otimes \Omega^1_Y\to\cH/\cK\otimes \Omega^1_{Y}$ be the map induced by $\pi$ and let $\tau=\pi'\circ \nabla_{|\cK}: \cK\to \cH/\cK\otimes \Omega^1_{B}$ be the second fundamental form of $\cK\subset \cH$.
\begin{proposition}\label{prop-UK} We
	have $ \cU\subset \cK$ and if $\tau\equiv 0$, then $\cU=\cK$.
\end{proposition}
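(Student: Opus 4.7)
The plan is to prove the two assertions separately, the first holding on any smooth base $Y$ and the second crucially using the fact that $B$ is one-dimensional.

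For the inclusion $\cU \subset \cK$, the argument is essentially tautological. By definition, a local section of $\cU$ is a section $s$ of $\cH^{1,0}$ which is flat with respect to $\nabla$, i.e.\ $\nabla s = 0$ in $\cH \otimes \Omega^1_Y$. Applying the induced projection yields $\sigma(s) = \pi^{0,1\prime}(\nabla s) = 0$, so $s \in \ker\sigma = \cK$. This step makes no use of the dimension of the base.

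For the second assertion, I would first observe that by construction $\tau = \pi' \circ \nabla|_\cK$, so the vanishing $\tau \equiv 0$ is precisely equivalent to $\nabla(\cK) \subset \cK \otimes \Omega^1_B$. This means $\nabla$ restricts to a holomorphic connection $\nabla_\cK$ on the subbundle $\cK \subset \cH$. Since $B$ is a complex curve, there are no holomorphic $2$-forms on $B$, so every holomorphic connection on $B$ is automatically flat; in particular $\nabla_\cK$ is flat. Hence $\cK$ is locally generated by $\nabla_\cK$-flat sections, and any such flat section $s$ satisfies $\nabla s = 0$ in $\cH \otimes \Omega^1_B$ while lying inside $\cH^{1,0}$ (because $\cK \subset \cH^{1,0}$). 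Thus $s$ is a section of $\bU$, and consequently $\cK \subset \cU$. Combined with the first inclusion, this gives $\cU = \cK$.

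The only subtle point I anticipate is making sure that $\cK$ is actually a subbundle of $\cH$ (not merely a coherent subsheaf), so that both the restricted connection $\nabla_\cK$ and the second fundamental form $\tau$ are well-defined objects. This is exactly where the curve hypothesis comes in: over a smooth one-dimensional base, the kernel of a map of locally free sheaves is locally free (after saturation), and the paper has already noted in Definition \ref{defUK} and the surrounding discussion that $\cK$ is a vector subbundle of $\cH^{1,0}$ in this setting. Once this is taken for granted, the rest of the argument is a direct consequence of the flatness of connections in dimension one.
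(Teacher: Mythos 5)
Your argument is correct and is essentially the proof of the cited result \cite[Proposition 2.2]{GPT18}, which the paper defers to: flat sections of $\cH^{1,0}$ are killed by $\sigma$, and $\tau\equiv 0$ makes $\cK$ a $\nabla$-invariant subbundle whose induced connection is automatically flat over a one-dimensional base, so $\cK$ is spanned by flat sections lying in $\cH^{1,0}$, i.e.\ by sections of $\bU$. The only cosmetic point is that a local section of $\cU=\bU\otimes\cO_Y$ is an $\cO_Y$-linear combination of flat sections rather than itself flat, but since $\sigma$ is $\cO_Y$-linear (as $\pi^{0,1}$ kills $\cH^{1,0}$) the inclusion $\cU\subset\cK$ follows exactly as you say.
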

See \cite[Proposition 2.2]{GPT18} for the proof.

\smallskip
\noindent
We want to study certain p.v.h.s. of weight $1$ on smooth complex (local) curves $B\subset A_g$ that we recall below. For any $B\subset A_g$ smooth complex curve, let $f: \cA\to B$ be the family of abelian varieties (defined up to finite base change) and let $A_b$ be the fibre over $b\in B$. Then the p.v.h.s. is defined by
\begin{eqnarray}\label{eq:vhsAV}
\bH_{\bZ}\simeq R^1f_\ast \bZ, &\quad \quad \cH^{1,0}= f_\ast \Omega^1_{\cA/B}\subset \cH= R^1f_\ast \bC\otimes \cO_{B};& 
\end{eqnarray}
 \begin{eqnarray*} \mbox{where }\quad (R^1f_\ast \bZ)_b\simeq H^1(A_b, \bZ), & (f_\ast\Omega^1_{\cA/B })_b\simeq H^0(A_b, \Omega^1_{A_b}), &{(R^1f_\ast \bC\otimes \cO_{B})}_b\simeq H^1(A_b, \bC). 
 \end{eqnarray*}
In particular, we are interested in this cases: 
\begin{enumerate}
	\item for $B'\subset J_g$, let $A_b=JC_b$ for $[C_b]\in M_g$ , the p.v.h.v. \eqref{eq:vhsAV} is given by
\begin{eqnarray}\label{eq:jacvhs}
	\bH_{\bZ,b}\simeq H^1(C_b,\bZ), & &\cH^{1,0}_b\simeq H^0(C, \omega_{C_b});
	\end{eqnarray} 
	Furthermore, let $f': \cC\to B$ be the associated family of curves. Then \eqref{eq:jacvhs} corresponds exactly fibre-wise to the geometric p.v.h.s. defined by the family of curves as
	\begin{eqnarray}\label{eq:vhsC}
	\bH_{\bZ}\simeq R^1f'_\ast \bZ,  & \cH^{1,0}= f'_\ast\Omega^1_{\cC/B}\subset \cH= R^1f'_\ast \bC\otimes \cO_{B}; &
	\end{eqnarray}
	\item for $B'\subset P_{g+1}$, let $A_b=P(C'_b, \eta_b)=P(\pi_b)$, where $[C'_b, \eta_b]\in R_{g+1}$ and $\pi_b:\tilde{C}_b\to C'_b$ is the associated \'etale double covering. The involution $\sigma_b:\tilde{C}_b\to \tilde{C_b}$ induced by $\eta_b$ splits \begin{align}\label{eq:splitPrym}
	&H^1(\tilde{C}_b, \bZ)\simeq H^1(\tilde{C}_b, \bZ)^+\oplus H^1(\tilde{C}_b, \bZ)^-
	&H^0(\tilde{C}_b, \omega_{\tilde{C}_b})\simeq H^0(\tilde{C}_b, \omega_{\tilde{C}_b})^+\oplus H^0(\tilde{C}_b, \omega_{\tilde{C}_b})^-
	\end{align}
	into invariant (denoted by $+$) and antiinvariant (denoted by $-$) parts such that 
		\begin{eqnarray*}
		H^1(\tilde{C}_b, \bZ)^+\subset  H^0(\tilde{C}_b, \omega_{\tilde{C}_b})^+ ,\quad &  H^1(\tilde{C}_b, \bZ)^-  \subset H^0(\tilde{C}_b, \omega_{\tilde{C}_b})^-
		\end{eqnarray*}
		 are lattices. Furthermore, 
	\begin{eqnarray*}
	H^0(\tilde{C}_b,  \omega_{\tilde{C}_b})^+\simeq H^0(C'_b, \omega_{C'_b}), \quad  & H^0(\tilde{C}_b,  \omega_{\tilde{C}_b})^-\simeq  H^0(C'_b,\omega_{C'_b}\otimes \eta_b)
\end{eqnarray*}
    So the p.v.h.v. \eqref{eq:vhsAV} is given by 
	\begin{eqnarray}\label{eq:prymvhs}
	\bH_{\bZ,b}^-\simeq H^1(\tilde{C}_b, \bZ)^-, & &{\cH^{1,0}}^-_b\simeq H^0(\tilde{C}_b, \omega_{\tilde{C}_b})^-\simeq H^0(C'_b,\omega_{C'_b}\otimes \eta_b).
	\end{eqnarray}
	Furthermore, let $\pi: \tilde{\cC}\to \cC'$ be the family of coverings over $B$ and let $\tilde{f}: \tilde{\cC}\to B$ and $f': \cC'\to B$ be the associated families of curves. They have their own p.v.h.s. defined by \eqref{eq:vhsC} of case $(1)$, satisfying fibre-wise \eqref{eq:splitPrym}.
	\end{enumerate}
	In both cases the polarizations are induced by the intersection form in a natural way.

	We are interested in studying local curves $B'\subset A_g$ that come from local curves $B$ in the compactifications $\overline{M_g}$ and $\overline{M_{g'}}$ of the moduli spaces of curves of genus $g$ and $g'=g+1, g\geq2$ respectively, with the property that these intersect the boundary in at most an isolated point $b_0\in B$. 
\begin{remark}\label{rem:extU}In the case that $B$ intersects the boundary in $b_0$ both $\cU$ and $\cK$ are a priori defined only on $B^0=B\setminus \{b_0\}$, according to Definition \ref{defUK}. But these naturally extend to the whole $B$ by using the Deligne extension (see e.g. \cite{G_ATrascendental_1971}) plus the unipotency monodromy theorem. Furthermore,  the extension is trivial on $\cU$ (see e.g. \cite{PT} and also \cite{CD:Answer_2017} for details). \end{remark}
	Notice that in the case of $B$ quasi-projective and $\cX$ a surface this is nothing but the Fujita decomposition \cite{Fuj78b}.
\subsubsection{Geodesics in the moduli space of principally polarized abelian varieties}
Let $H_g$ denote the Siegel upper half space. As a symmetric space of non-compact type it is endowed by a symmetric metric $h^s$, called the Siegel metric, defining a metric connection $\nabla^{LC}$ on the tangent bundle $TH_g$. As a parametrizing space of weight $1$ p.v.h.s., it carries a universal p.v.h.s. $(\bH_{\bZ},\cH^{1,0}, \cQ)$ with its Hodge  metric defined by $Q$, inducing a metric $h$ together with a metric connection $\nabla^{hdg}$ on $\Hom (\cH^{1,0}, \cH/\cH^{1,0})$. There is a natural inclusion 
\begin{eqnarray}\label{eq:SieHdg} 
(TH_g, \nabla^{LC})\subset (\Hom (\cH^{1,0}, \cH/\cH^{1,0}), \nabla^{hdg})
\end{eqnarray} compatible with the metric structure (see e.g. a classical reference  \cite{G_ATrascendental_1971} or some more recent references \cite{G18}, \cite{GPT18}).

\smallskip
We are interested in studying {\em (local) geodesics of $A_g$}. For this, we consider the universal covering $\psi: H_g\to A_g$  and the metric properties introduced before on $H_g$. Let $[A]\in A_g$ and let $\zeta\in T_{[A]}A_g$. 
Take $\tilde{A}\in \psi^{-1}([A])$ and consider $\zeta$ as $\zeta \in T_{\tilde{A}}H_g\simeq T_{[A]}A_g$. Then in $H_g$ a (local) geodesic at $(\tilde{A}, \zeta)$ is simply a curve $\gamma:(-\epsilon, \epsilon)\to H_g$ such that $\gamma(0)=\tilde{A}$ and $\gamma'(0)=\zeta$  satisfying  $\nabla^{LC}_{\gamma'}\gamma'=0$. Working locally, we can assume w.l.o.g that $\gamma((-\epsilon, \epsilon))$ is contained in one sheet of $\psi$.

\begin{definition}\label{def:geoA} Let $[A]\in A_g$ and let $\zeta\in T_{[A]}A_g$ 
A (local) geodesic associated to $([A], \zeta)$ is a map $\psi\circ \gamma: (-\epsilon, \epsilon)\to A_g$, where $\gamma$ is a local geodesic in $H_g$ defined as above. 
	\end{definition}

We are interested in points of $J_g\subset A_g$ and directions in $T_{[A]}J_g\subset T_{[A]}A_g$. If  $[A]=[JC]$, namely the Jacobian of some $[C]\in M_g$, and $\zeta = \cup \xi$, for some $\xi \in H^1(C, T_C)$, under the isomorphisms $T_{[C]}M_g\simeq H^1(C, T_C)$ and $T_{[J_g]}A_g\simeq \Sym^2H^1(C, \cO_C)\simeq \Sym^2H^0(C, \omega_C)^\vee$, we will also refer to the geodesic at $(JC, \zeta)$ as the geodesic at $(C, \xi)$. This is admitted since the Torelli map is an immersion outside the Hyperelliptic locus and we are not considering hyperelliptic curves.  

A possible approach to the study of geodesics in $A_g$ involves variation of Hodge structures of weight $1$ by relating the Siegel metric with the Hodge metric (and their connections) under the identification above. 
We have the following (see \cite[Lemma 3.3 and Lemma 3.4]{GPT18} for the proof)
\begin{lemma}\label{prop:ciccia} Let $\gamma:(-\epsilon, \epsilon)\to A_g$ be a local geodesic associated to $([A], \zeta)$. Then there exists a complex curve $B\subset H_g$ containing the geodesic and such that $\cK =\cU$.
	\end{lemma}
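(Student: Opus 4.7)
The plan is to take the geodesic itself, after lifting it to $H_g$, as the complex curve $B$, and then use the geodesic condition to show that the second fundamental form $\tau$ of $\cK\subset \cH$ vanishes identically on $B$; by Proposition \ref{prop-UK} this immediately gives $\cU=\cK$.

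First I would fix a lift $\tilde\gamma$ of $\gamma$ to the universal cover $\psi:H_g\to A_g$ (possible locally) and take $B$ to be an open neighborhood of $\tilde\gamma((-\epsilon,\epsilon))$ that is a smooth complex curve in $H_g$. Restricting the universal weight-one pvhs to $B$ gives $\cH^{1,0}\subset \cH$ with $\nabla$, together with the objects $\sigma$, $\cU$, $\cK$ and $\tau$ of Section \ref{sec:vhs}. Writing $X=\gamma'$ for the tangent vector along $B$, the inclusion $TH_g\hookrightarrow \Hom(\cH^{1,0},\cH/\cH^{1,0})$ of \eqref{eq:SieHdg} identifies $X$ with $\sigma_X\in \Hom(\cH^{1,0},\cH/\cH^{1,0})$.

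The key translation step is that the compatibility of \eqref{eq:SieHdg} with the metric (and hence Levi-Civita) structures turns the geodesic condition $\nabla^{LC}_X X=0$ into $\nabla^{hdg}_X\sigma_X=0$ along $B$. Combined with the standard Leibniz rule on $\Hom(\cH^{1,0},\cH/\cH^{1,0})$,
\begin{equation*}
	(\nabla^{hdg}_X\sigma_X)(s) \;=\; \nabla^{0,1}_X\bigl(\sigma_X(s)\bigr)\;-\;\sigma_X\bigl(\nabla^{1,0}_X s\bigr),
\end{equation*}
this is where the argument pivots. For a local holomorphic section $s$ of $\cK$ one has $\sigma_X(s)=0$ by definition, so the first term on the right vanishes; the left-hand side vanishes by the geodesic condition. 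Hence $\sigma_X(\nabla^{1,0}_X s)=0$, i.e.\ $\nabla^{1,0}_X s\in \cK$. But for $s\in\cK$ also $\nabla_X s=\nabla^{1,0}_X s$ (again because $\sigma_X(s)=0$), so $\nabla_X s\in \cK$ and therefore $\tau(s)=0$. Thus $\tau\equiv 0$ on $B$ and Proposition \ref{prop-UK} yields $\cU=\cK$.

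The delicate point, and the one requiring most care, is the first step: making precise the passage from $\nabla^{LC}_X X=0$ to $\nabla^{hdg}_X\sigma_X=0$. What is really needed is that the image of $TH_g$ inside $\Hom(\cH^{1,0},\cH/\cH^{1,0})$ is a parallel subbundle with respect to $\nabla^{hdg}$, so that the restriction of $\nabla^{hdg}$ to it is a metric connection which, being torsion-free in the symmetric-space sense, agrees with $\nabla^{LC}$. This is a classical fact reflecting the Hermitian symmetric structure $H_g=\mathrm{Sp}(2g,\bR)/\mathrm{U}(g)$ and the horizontality of weight-one period maps; once granted, the rest of the proof is the short Leibniz computation above.
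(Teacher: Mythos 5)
Your argument is essentially the paper's: the paper gives no internal proof of this lemma but defers to [GPT18, Lemmas 3.3 and 3.4], and the proof there follows exactly your scheme --- complexify the real-analytic geodesic to obtain $B$, use the connection-compatibility of the inclusion \eqref{eq:SieHdg} to turn $\nabla^{LC}_{\gamma'}\gamma'=0$ into $\nabla^{hdg}_{\gamma'}\sigma_{\gamma'}=0$, deduce $\tau\equiv 0$ by the Leibniz computation, and conclude $\cU=\cK$ via Proposition \ref{prop-UK}. The only points you leave implicit are routine: that $\tau$ is holomorphic (and $\cK$ of constant rank near the geodesic, by parallel transport), so its vanishing on the totally real arc $\gamma((-\epsilon,\epsilon))$ propagates to all of $B$ by the identity principle, and that the phrase ``compatible with the metric structure'' in \eqref{eq:SieHdg} does include compatibility of the two connections, which is precisely what the cited reference verifies.
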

	Notice that we can shrink $B$ around $\gamma((-\epsilon, \epsilon))$ in such a way that it is contained in one sheet on $\psi$ and so we can then look at it as a curve in $A_g$. 

\subsection{Useful isotropic spaces on unitary flat bundles}\label{sec:isotropic}
\begin{definition}\label{def-isotropic}
	
	Let $C$ be a smooth projective curve, let $\cF$ be a rank $2$ vector bundle over $C$ and  let $\alpha: \bigwedge^2H^0(C,\cF)\to H^0(C,\det \cF)$ be a linear map. A subspace $W\subset H^0(C,\cF) $ is called {\em isoptropic } with respect to $\alpha$ if $\alpha_{|\bigwedge^2 W}\equiv0$.

	Let $\cL, 	\cL'$ be two line bundles on $C$, let $0\to \cL \to \cF \to \cL'\to 0$ be a s.e.s. associated to $\xi\in \Ext^1_{\cO_C}(\cL', \cL)$ and let $\alpha: \bigwedge^2H^0(C,\cF)\to H^0(C,\det \cF)\simeq H^0(C,\cL\otimes \cL')$. A subspace $V\subset H^0(C,\cL')$ is called {\em isotropic} with respect to $\alpha$ if it lifts to a subspace $W\subset H^0(C,\cF)$ isotropic with respect to $\alpha$.
	 \end{definition}
	 Notice that a subspace $V \subset H^0(C,\cL')$ lifts to $W\subset H^0(C,\cF)$ if and only if it lies in the kernel of the coboundary morphism $\delta: H^0(C,\cL')\to H^1(C,\cL)$ on the long exact sequence in cohomology.

	 	We are interested in the case of isotropic subspaces defined by the s.e.s. 
	 	\begin{equation}
	 	\label{ses:firstext}
	 	\begin{tikzcd}
	 	0 \arrow{r} & \cO_C \arrow{r} & \cF
	 	\arrow{r}& \om _C \arrow{r} & 0 &: \xi
	 	\end{tikzcd}
	 	\end{equation}
	 	 and with respect to the associated map $\alpha: \bigwedge^2H^0(C,\cF)\to H^0(C,\det \cF)\simeq H^0(C,\omega_C)$. Namely, by definition $V\subset H^0(C, \omega_C)$ is isotropic with respect to $\alpha$ if there exists a filting $\tilde{V}\subset H^0(C, \cF)$ such that $\alpha(\bigwedge^2V)=0$. 
\begin{theorem}\label{thm:cdfU} Let $f: \cC \to B$ be a fibration of smooth projective curves over a smooth complex curve $B$ and let $\cU\subset f_\ast \omega_{\cC/B}$ be the associated unitary flat bundle (Definition \ref{defUK} ). Assume that the fibres $U_b\subset H^0(\omega_{C_b})$ are isotropic subspaces with respect to $\alpha_b$ for any $b\in B$. Then (up to a finite base change) there exists a smooth projective curve $\Sigma$ of genus $g'=\rk \cU$ and a non constant fibre-preserving map $\varphi: \cC \to \Sigma$ such that $U_b\simeq \varphi^\ast H^0(\Sigma,\omega_{\Sigma})$.
	\end{theorem}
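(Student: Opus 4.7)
The approach is to interpret the flat sections of $\cU$ as honest holomorphic $1$--forms on the total space $\cC$ and then apply a Castelnuovo--de Franchis type argument on $\cC$. Up to a finite \'etale cover of $B$ I may assume $\cU$ is a trivial flat bundle, with basis $u_1,\dots ,u_{g'}\in H^0(B,\cU)\subset H^0(B,f_\ast \omega_{\cC/B})$. The push--forward to $B$ of the relative cotangent sequence
\begin{equation*}
0\longrightarrow f^\ast \Omega^1_B\longrightarrow \Omega^1_\cC \longrightarrow \omega_{\cC/B}\longrightarrow 0
\end{equation*}
has connecting homomorphism $f_\ast \omega_{\cC/B}\to R^1f_\ast\cO_\cC\otimes \Omega^1_B$ equal, up to canonical identification, to the second fundamental form $\sigma$ of the Hodge filtration. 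By Proposition \ref{prop-UK}, $\cU\subset \cK=\ker\sigma$, so each $u_i$ lifts to a global holomorphic $1$--form $\alpha_i\in H^0(\cC,\Omega^1_\cC)$; the ambiguity of the lift is a section of $f^\ast \Omega^1_B$.

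Next I turn the fibrewise isotropy into a global identity on $\cC$. For each $b\in B$ the restricted sequence $0\to \cO_{C_b}\to \Omega^1_\cC\vert_{C_b}\to \omega_{C_b}\to 0$ has extension class equal to the Kodaira--Spencer class $\xi_b$ of $f$ at $b$, so $\Omega^1_\cC\vert_{C_b}$ plays the role of the rank $2$ bundle $\cF_b$ in Definition \ref{def-isotropic}; via the canonical isomorphism $\Omega^2_\cC\vert_{C_b}\simeq \omega_{C_b}$ the restriction $(\alpha_i\wedge \alpha_j)\vert_{C_b}$ is identified with $\alpha_b(\alpha_i\vert_{C_b}\wedge \alpha_j\vert_{C_b})$. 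The restrictions $\alpha_i\vert_{C_b}$ are particular lifts of $u_i\vert_{C_b}$, differing from an isotropic lift of $U_b$ by additive scalars $c_i(b)$ which vary holomorphically in $b$; substituting $\alpha_i\mapsto \alpha_i+f^\ast(c_i(t)\,dt)$ absorbs this ambiguity. After the adjustment $(\alpha_i\wedge \alpha_j)\vert_{C_b}=0$ for every $b$, and since the fibres cover $\cC$ and $\Omega^2_\cC$ is a line bundle one concludes $\alpha_i\wedge \alpha_j=0$ in $H^0(\cC,\Omega^2_\cC)$ for all $i,j$.

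Finally, the generalized Castelnuovo--de Franchis theorem applied to the $g'$ linearly independent forms $\alpha_i$ with pairwise vanishing wedges yields (after compactifying if necessary) a morphism $\varphi:\cC \to \Sigma$ onto a smooth projective curve such that $\alpha_i\in \varphi^\ast H^0(\Sigma,\omega_\Sigma)$ and $g(\Sigma)\geq g'$. Stein factorising we may assume $\varphi$ has connected fibres; since each $\alpha_i$ restricts non trivially to $C_b$, $\varphi$ does not contract any $C_b$, so $\varphi$ is fibre--preserving in the required sense. Conversely $\varphi^\ast H^0(\Sigma,\omega_\Sigma)$ consists of pullbacks from a fixed curve, hence defines flat sections of $\cH^{1,0}$ that form a flat unitary subbundle contained in $\cU$; this forces $g(\Sigma)\leq \rk\cU=g'$. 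Combined with the opposite inequality, $g(\Sigma)=g'$ and $U_b=\varphi_b^\ast H^0(\Sigma,\omega_\Sigma)$ for every $b$, as claimed. The most delicate point is the adjustment step in the second paragraph, namely the holomorphic dependence of the correction scalars $c_i(b)$, which uses both the flatness of $\cU$ and the fact that the extension class $\xi_b$ is exactly the Kodaira--Spencer class of the given family $f$.
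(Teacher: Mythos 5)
Your overall strategy coincides with the one the paper (via \cite{PT} and \cite{GST}) uses: lift the flat sections of $\cU$ to holomorphic $1$-forms on the total space, use the fibrewise isotropy to force the pairwise wedge products to vanish, and conclude with a generalized Castelnuovo--de Franchis theorem; the converse inequality $g(\Sigma)\le \rk\cU$ obtained from the flatness of $\varphi^*H^0(\Sigma,\omega_\Sigma)$ is also the right way to pin down $g'=\rk\cU$. The adjustment step you single out as delicate does work: for $\rk\cU\ge 2$ the isotropic lift of $U_b$ is unique, and the correction scalars $c_i(b)$ are the unique solution of a linear system with holomorphically varying coefficients, hence holomorphic.

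The one genuine gap is the closedness of the forms $\alpha_i$. In the setting where this theorem is applied, $B$ is a germ of a curve around a geodesic, so $\cC$ is a non-compact surface and ``compactifying if necessary'' is not available; the generalized Castelnuovo--de Franchis theorem \cite[Theorem 1.5]{GST} that the paper invokes is a statement about \emph{closed} holomorphic $1$-forms, and on a non-compact surface a holomorphic $1$-form need not be closed. Your lifting only uses $\cU\subset\cK=\ker\sigma$, i.e.\ the vanishing of the second fundamental form, which produces holomorphic lifts but gives no control on $d\alpha_i$; moreover your modification $\alpha_i\mapsto\alpha_i+f^*(c_i(t)\,dt)$ does not change $d\alpha_i$. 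What saves the argument is the full flatness of $\cU$ rather than the inclusion $\cU\subset\cK$ alone: a flat section of $\bU$ is a locally constant family of cohomology classes of type $(1,0)$ and therefore admits a \emph{closed} holomorphic lift (this is exactly \cite[Lemma 3.2]{PT}, the step the paper's sketch starts from); since any two holomorphic lifts of the same section of $f_\ast\omega_{\cC/B}$ differ by $f^*\beta$ with $\beta\in H^0(B,\Omega^1_B)$, which is closed, your $\alpha_i$ are then closed as well. With this point supplied, your proof is essentially the paper's.
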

	
	In this case, isotropic subspaces of $H^0(C,\omega_C)$ are Massey-trivial as in \cite[Definition 2.2]{PT} and, under this arrangement, we refer to \cite[section 2]{GPT18}) or directly \cite[Theorem 4.4]{PT} for the proof.  Just to give an idea, the proof  can be recovered putting together \cite[Lemma 3.2 and Proposition 4.3]{PT}  and \cite[Theorem 1.5]{GST}. The sketch of the proof is the following: let $\bU\subset \cU$ be the local system underlying $\cU$ and for any $b\in B$ 
	\begin{itemize}
		\item[(i)] by \cite[Lemma 3.2]{PT}, there exists a lifting $l:\bU\simeq  \tilde{\bU}\subset \Omega^1_{S,d},$ where $\Omega^1_{S,d}=\ker \{d:\Omega^1\to \Omega^2\}$;
		\item[(ii)] by \cite[Proposition 4.3]{PT} if $U_b$ is isotropic with respect to $\alpha_b$ for any $b\in B$, then there is a basis $s_1, \dots, s_r$ of local sections of $\bU$ which is pointwise a base of $U_b$, under the isomorphisms $U_b\simeq \bU$ (equivalently a local base of flat sections of $\bU$) such that $\omega_1=l(s_1), \dots \omega_r=l(s_r)$ is a set closed holomorphic forms on $\cC$ such that $\omega_i\wedge \omega_j=0,$ for any$i\neq j.$
		\item[(iii)] by \cite[Theorem 1.5]{GST}, applied to the setting constructed in $(ii)$, we obtain a fibre-preserving map $\phi: \cC\to \Sigma$ over a curve $\Sigma$ of genus $g(\Sigma)\geq 2$ such that for any $i$, $\omega_i\in \phi^\ast H^0(\Sigma, \omega_{\Sigma})$ and so by  construction $U_b\simeq \phi^\ast H^0(\Sigma, \omega_{\Sigma}).$ 
		\end{itemize}
%
	 To produce useful isotropic subspaces we will use the following (see \cite[Lemma X.7]{Bea83})
	 \begin{lemma}\label{lem-decel}Let $C$ be a smooth projective curve and let $\cF$ be a vector bundle of rank $2$ on $C$. If 	\begin{eqnarray}2h^0(C,\cF)-3 > h^0(C,\det \cF ),\end{eqnarray}
	 	then the kernel of the map $\alpha$ has a decomposable element defining (up to saturation over the base locus) a line bundle $\cL \subset \cF$ such that $h^0(C,\cL) \geq  2$ and $\cF/\cL$ is a line bundle.
	 \end{lemma}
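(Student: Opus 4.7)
The plan is to produce a non-zero decomposable element in $\ker \alpha \subset \bigwedge^2 H^0(C, \cF)$ via a Grassmannian intersection argument, and then to read off the line subbundle $\cL$ from the vanishing of the resulting wedge section.

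Set $h := h^0(C, \cF)$. A direct rank-nullity estimate gives
\[
\dim \ker \alpha \;\geq\; \binom{h}{2} - h^0(C, \det \cF) \;\geq\; \binom{h}{2} - (2h - 4),
\]
where the second inequality is the hypothesis $h^0(\det \cF) \leq 2h - 4$. Projectivizing, $\bP(\ker \alpha) \subset \bP(\bigwedge^2 H^0(\cF)) \cong \bP^{\binom{h}{2}-1}$ is a linear subspace of dimension at least $\binom{h}{2} - 2h + 3$. Via the Pl\"ucker embedding the locus of decomposable classes is the Grassmannian $G = \Grass(2, H^0(\cF)) \subset \bP^{\binom{h}{2}-1}$, of dimension $2(h-2)$; the two dimensions add up to at least $\binom{h}{2} - 1$, so $\bP(\ker \alpha) \cap G \neq \emptyset$ by the standard fact that a linear subspace and an irreducible closed subvariety of complementary dimensions in projective space must meet. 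Any element of this intersection is of the form $s \wedge t$ with $s, t \in H^0(\cF)$ linearly independent and $\alpha(s \wedge t) = 0$ in $H^0(\det \cF)$.

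I would then construct $\cL$ from $s$ and $t$. The vanishing of $s \wedge t$ as a global section of $\det \cF$ forces $s(p)$ and $t(p)$ to be linearly dependent in the fibre $\cF_p$ at every $p \in C$. Hence the morphism $\cO_C^{\oplus 2} \to \cF$ assembled from $(s,t)$ has image of generic rank one; its saturation in $\cF$ is a line subbundle $\cL \subset \cF$ containing both $s$ and $t$, so $h^0(\cL) \geq 2$, while $\cF/\cL$ is torsion-free of rank one, hence a line bundle.

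The delicate point is the intersection step: asserting $\bP(\ker \alpha) \cap G \neq \emptyset$ from a pure dimension count relies on the positivity of the hyperplane class in the Chow ring of $\bP^{\binom{h}{2}-1}$ (equivalently, on the ampleness of $\cO_{\bP^N}(1)$), and on the fact that both subvarieties are closed and irreducible. The very low-$h$ regime should be checked separately, but the hypothesis forces $h \geq 2$, and for $h = 2$ one has $\bigwedge^2 H^0(\cF) \cong \bC$ with $\alpha \equiv 0$, so the unique generator $s_1 \wedge s_2$ is automatically decomposable and the construction applies verbatim.
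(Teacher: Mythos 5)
Your proof is correct: the dimension count $\dim\bP(\ker\alpha)+\dim\Grass(2,H^0(C,\cF))\geq\binom{h}{2}-1$ combined with the projective dimension theorem does produce a decomposable element of $\ker\alpha$, and saturating the rank-one image of $(s,t):\cO_C^{\oplus 2}\to\cF$ gives the line bundle $\cL$ with $h^0(C,\cL)\geq 2$ and locally free quotient. The paper gives no proof of this lemma, deferring entirely to the cited Lemma X.7 of Beauville; your Grassmannian intersection argument is precisely the standard proof of that cited result, so in substance you have reproduced the intended argument.
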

	 Examples are constructed by taking $\cF$ defined by \eqref{ses:firstext} and $\cL$ such that $h^0(C,\cF/\cL)=1$ (as we will see below).

\section{The theorem in the Jacobian locus}\label{sec:Jacobian}
%
	In the following, we answer negatively to the preliminary problem if geodesics defined as in Theorem \ref{Thm-Main1} stay inside the Jacobian locus $\overline{J_g}$. The proof in this case is a straightforward application  \cite[Theorem 1.1]{GPT18}.
 
 \begin{lemma}\label{lem:Jacobians}For $g\geq 7$, let $[C]\in M_g$ and let $\xi \in H^1(C, T_C)$ such that $\rk \xi = k<\Cliff C$. Consider the geodesic $\gamma$ associated to $(C, \xi)$. Then for a sufficiently small $\epsilon \in \bR^+,$ $\gamma((-\epsilon, \epsilon))\cap J_g=[JC]$. 
 	\end{lemma}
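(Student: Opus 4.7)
The plan is to argue by contradiction. Suppose that, for every $\epsilon>0$, the intersection $\gamma((-\epsilon,\epsilon))\cap J_g$ strictly contains $\{[JC]\}$. Since $[JC]$ lies in $J_g$, which is the complement in $\overline{J_g}$ of a proper closed subset, and $\gamma$ is continuous, one can shrink $\epsilon$ so that $\gamma((-\epsilon,\epsilon))\subset J_g$. Applying Lemma~\ref{prop:ciccia} to the geodesic associated to $([JC],\cup\xi)$, I obtain a smooth complex curve $B\subset H_g$ which projects to a local curve in $J_g$ through $[JC]$, contains $\gamma((-\epsilon,\epsilon))$, and along which $\cK=\cU$ as sub-sheaves of $\cH^{1,0}$.

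Next, I translate the inclusion $B\subset J_g$ into a family of curves. Up to a finite \'etale base change, one has a fibration of smooth projective curves $f:\cC\to B$ with $C_0=C$ whose associated geometric p.v.h.s.\ \eqref{eq:vhsC} coincides with the one pulled back from $H_g$. At each $b\in B$, $\cK_b=\ker(\cup\xi_b)\subset H^0(C_b,\omega_{C_b})$, where $\xi_b\in H^1(C_b,T_{C_b})$ is the Kodaira--Spencer class at $b$. The hypothesis $\rk\xi=k$ forces $\dim\cK_0=g-k$, so the vector bundle $\cU$ has rank $r\leq g-k$. The case $k=0$ is immediate, since then $\xi=0$ and the geodesic is the constant point $[JC]$; from now on I assume $k\geq 1$ and $r\leq g-1$.

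The conclusion is then a direct invocation of \cite[Theorem~1.1]{GPT18}: in exactly the above situation -- a non-isotrivial family of smooth projective curves over a smooth complex base for which $\cK=\cU$ -- that theorem yields the lower bound $\rk\xi_b\geq \Cliff C_b$ at every point $b\in B$. Evaluating at $b=0$ gives $k\geq\Cliff C$, contradicting the hypothesis $k<\Cliff C$.

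The main obstacle in this approach is precisely the last Clifford-index bound, which is the substantive content of \cite[Theorem~1.1]{GPT18}. Its proof combines the structure theorem for unitary flat sections (Theorem~\ref{thm:cdfU}) -- producing, after possibly shrinking and base-changing, a non-constant fibre-preserving morphism $\varphi:\cC\to\Sigma$ onto a smooth curve of genus $r$ with $U_b\simeq\varphi^{*}H^0(\Sigma,\omega_\Sigma)$ -- with the elementary inequality \eqref{eq:gon-cliff} between gonality and Clifford index, applied to the induced covering $\varphi_0:C\to\Sigma$. With those ingredients packaged in \cite{GPT18}, the reduction carried out above is indeed a straightforward application of that theorem.
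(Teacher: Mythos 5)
Your proposal follows essentially the same route as the paper: shrink to a local geodesic contained in $J_g$, invoke Lemma~\ref{prop:ciccia} to produce the curve $B$ with $\cK=\cU$, pass to the family of curves $f:\cC\to B$, and derive a contradiction with $k<\Cliff C$. The one real difference is that you outsource the entire second half to \cite[Theorem~1.1]{GPT18}, whereas the paper re-derives that argument in full; since the paper itself advertises the lemma as ``a straightforward application of \cite[Theorem~1.1]{GPT18}'', the citation is legitimate and your reduction is sound. Two caveats. First, your sketch of what that theorem actually does omits its central step: Theorem~\ref{thm:cdfU} can only be applied after one verifies that the fibres $U_b=\ker\cup\xi_b$ are isotropic with respect to $\alpha_b$, and this is precisely where the hypothesis $k<\Cliff C$ enters. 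One uses Lemma~\ref{lem-decel} (the numerical hypothesis $2h^0(E_t)-3>h^0(\omega_{C_t})$ holding because $h^0(E_t)=g-k+1$ and $k<\Cliff C\leq (g-1)/2$) to produce a line bundle $\cM_t\subset E_t$ with $h^0(\cM_t)\geq 2$, and then rules out $h^0(\omega_{C_t}\otimes\cM_t^{-1})\geq 2$ because otherwise $\cM_t$ would contribute to the Clifford index with $\Cliff(\cM_t)\leq k<\Cliff C$; only then is $\ker\cup\xi_t=H^0(C_t,\cM_t)$ isotropic. The final contradiction is then via Riemann--Hurwitz, $g(\Sigma)=\rk\cU=g-k\leq (g+1)/2$ against $g-k>(g+1)/2$, rather than the gonality--Clifford inequality applied to $\varphi_0$ as you suggest; note also that one needs $\rk\cU=g-k$ exactly (from parallel transport along the geodesic plus $\cU=\cK$), not merely $\rk\cU\leq g-k$. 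Second, your opening reduction from ``$\gamma((-\epsilon,\epsilon))\cap J_g$ strictly contains $[JC]$ for every $\epsilon$'' to ``$\gamma((-\epsilon,\epsilon))\subset J_g$'' needs more than continuity and openness of $J_g$ in $\overline{J_g}$: one must use that $\gamma$ is real analytic and $\overline{J_g}$ is an analytic subvariety, so that $\gamma^{-1}(\overline{J_g})$ near $0$ is either discrete or the whole interval (the paper makes this identity-principle point explicit only in the Prym case in Section~\ref{sec:draft}).
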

 \begin{proof} Let  $\gamma$ be the geodesic associated to $(C, \xi)$ (Definition \ref{def:geoA}) and satisfying the assumptions of the lemma. 
 	Let $t=\gamma(s)$ and consider the s.e.s. associated to $\xi_t\in H^1(C_t,T_{C_t})$
 	\begin{equation}
 	\label{ses:firstextensionC}
 	\begin{tikzcd}
 	0 \arrow{r} & \cO_{C_t} \arrow{r} & E_t
 	\arrow{r}& \om _{C_t} \arrow{r} & 0 &: \xi_t .
 	\end{tikzcd}
 	\end{equation}
 	The coboundary morphism on the long exact sequence induced in cohomology is computed by the cup product with the extension class, $\cup \xi_t : H^0(C_t,\omega_{C_t})\to H^1(C_t, \cO_{C_t})$. Consequently, it corresponds to the tangent vector $\gamma'(s)\in T_{[JC_t]}A_g$ at $t=\gamma(s)$. Since $\gamma$ is a geodesic, we have that any $t=\gamma(s),$ $\dim \ker \cup\xi_{t}=g-k$  Indeed, by assumption for $t=\gamma(0)$,   $\dim \ker \cdot \cup \xi=g-\rk \cdot \cup \xi =g-k$ and moving by parallel transport, for $t=\gamma(s)$, we obtain $\dim \ker \cup\xi_{t}=\dim \ker \cup\xi_{\gamma(s)}=\dim \ker \cup \xi_{\gamma(0)}=\dim \ker \cup \xi$. 
 	
 	\vspace{1.5mm}
 Consider a map $\alpha_t:\bigwedge^2 H^0(C_t, E_t)\to H^0(\det E_t)\simeq H^0(C_t,\omega_{C_t})$. We want to prove that $\ker \cup \xi_t$ is isotropic with respect to $\alpha_t$, namely that $\alpha_t(\bigwedge^2 \ker \cup\xi_t) =0$.  To do this we, use Lemma \ref{lem-decel} to to construct a useful line bundle starting by a decomposable element. Observe that the assumption $2 h^0(C'_t,E_b)-3 > h^0(C_t, \omega_{C_t})$ of Lemma \ref{lem-decel} is satisfied since $h^0(C_t, E_t)= h^0(C_t,\cO_{C_t})+ \dim \ker \cup \xi_t=1+g-k$ and $k<\Cliff C\leq (g-1)/2$ (see  (\ref{en:goncliff}), Subsection \ref{sec:acgc}). Consequently,  the kernel of the map $\alpha_t$ has a decomposable element and such an element defines (up to saturation) a line bundle $ \cM_t$ such that $h^0(C, \cM_t)\geq 2$ and a diagram 
 	
 	\begin{equation}\label{dia:cliffC}
 	\begin{tikzcd}
 	& & 0 \arrow{d} & & \\
 	& & \cM_t \arrow{d}{i_2} & & \\
 	0 \arrow {r} & \cO_{C_t} \arrow{r}{i} \arrow{rd}{s'} & E
 	\arrow{d}{p_{2}} \arrow{r} &
 	\om_{C_t} \arrow{r} & 0 \\
 	& & \omega_{C_t}\otimes \cM^{-1}_t\arrow{d} & &\\
 	&& 0 &&
 	\end{tikzcd}
 	\end{equation}
Observe that $H^0(C_t, \cM_t)\subset H^0(C_t, E_t)$ is by definition isotropic with respect to $\alpha_t$  and furthermore $H^0(C_t, \cM_t)\subset \ker \cup \xi_t$. To prove that $\ker \cup \xi_t$ is isotropic we show that $\ker \cup \xi_t=H^0(C_t, \cM_t)$, if $\Cliff(C)>k$. Equivalently, that $h^0(C_t, \omega_{C_t}\otimes \cM^{-1}_t)=1$, if $\Cliff(C)>k$.  First of all $h^0(C_t, \omega_{C_t}\otimes \cM^{-1}_t)\geq 1$, since $s'\neq 0$ (otherwise $\cL\simeq \cO_C$), which is not admitted). By contradiction, let us assume that $h^0(C_t, \omega_{C_t}\otimes \cM^{-1}_t)\geq 2$. In this case both $h^0(C_t,\cM_t)$ and  $h^0(C_t, \omega_{C_t}\otimes \cM^{-1}_t)$ have at least two sections and so by definition  $\cM_t$ contributes to $\Cliff(C_t)$ and $\Cliff C_t \leq \Cliff{\cM_t}$ (see (\ref{en:goncliff2}), Subsection \ref{sec:acgc}). By (\ref{en:goncliff2}) of Section \ref{sec:acgc}, for a general $t$, $\Cliff(C)\leq\Cliff (C_t)\leq \Cliff(\cM_t)$. Comparing  $h^0(C_t, \omega_{C_t}\otimes \cM^{-1}_t)+h^0(C,\cM_t)\geq g-k+1$ (given by the diagram) and $h^0(C_t, \cM_t)-h^0(C_t, \omega_{C_t}\otimes \cM^{-1}_t)=\deg \cM_t - g+1$ (given by the Riemann-Roch formula), one has $2h^0(C_t, \cM_t)\geq \deg \cM_t-g+1+g-k+1$ and so $\Cliff (C)\leq\Cliff (\cM_t)\leq k$, contradicting our assumptions.
 
 
 \smallskip
 \noindent
 The argument above proved that $\ker \cup\xi_t= h^0(C_t, \cM_t)$ and so that it is isotropic with respect to $\alpha_t$,  for $t$ general on $\gamma$. We now prove that this kind of geodesic does not lie in the Jacobian locus. First of all, since $\gamma((-\epsilon, \epsilon))\subset J_{g}$ is real analytic, by Lemma \ref{prop:ciccia} we can take a local holomorphic extension to a smooth complex curve $B\supset \gamma((-\epsilon, \epsilon))$ and let $f:\cC\to B$ be the corresponding family of curves. Namely,  $C_t=f^{-1}(t)$ maps to $t=[JC_t]$. Let $U_t$ be the fibre over $t$ of the unitary flat bundle $\cU$ of the family $f:\cC\to B$ (Definition \ref{defUK} and Remark \ref{rem:extU}, case $(1)$). Since $\gamma((-\epsilon, \epsilon))\subset B$ is a geodesic, $\ker \cup \xi_t  = U_t$ for a general $t$ and so $U_t\subset H^0(\omega_{C_t})$ is isotropic with respect to $\alpha_t$ for the general $t\in B$ (we can repeat the previous argument using the whole $B$). Then, by Theorem \ref{thm:cdfU},
 we have a non constant morphism ${C_t}\to \Sigma$, for any $t\in B$, where $\Sigma$ is a curve of genus $g(\Sigma)$ equal to the rank of $\cU$ and by the Riemann-Hurwitz formula we have $g(\Sigma)=\rk \cU\leq (g+1)/2$ since $f$ is non isotrivial. 
 
 To conclude, we show that this is not possible. By Lemma \ref{prop:ciccia}, $\rk \cU=\rk \cK$,  and so $\rk \cU=\rk \cK = \dim \ker \cdot \cup \xi=g-k$. Since $k<\Cliff C\leq (g-1)/2$ (by assumption and (\ref{en:goncliff2}) of Subsection \ref{sec:acgc}), we obtain $g-k>(g+1)/2$, which is not admitted.  \end{proof}
  
\section{Proof of the main result}\label{sec:draft}
      
In this section we prove Theorem \ref{Thm-Main1}. Let $JC$ be a general Jacobian variety, let $\zeta \in T_{[JC]}J_g$ such that $\rk \zeta= k<\Cliff C-3$ and let $\gamma$ be the geodesic associated to $(C, \zeta)$ (Definition \ref{def:geoA}).  We consider $\overline{J_g}\subset  P(\Delta_0^{''})\subset P(S)\subset \overline{P_{g+1}}$ (see \ref{eq:Jac}, Subsection \ref{sec:JacPrym}). First of all, we observe that by assumption $\Cliff C >2$ and so by Corollary \ref{cor:genJac}, through all the proof, if $P$ is a generalized Prym variety such that $P=JC$, then $P$ is of type $(\ast\ast)$ and not $(\ast)$ (Definition \ref{def:genPrym}, Remark \ref{rem-type*}) and furthermore provided by Lemma \ref{lem:normJacPrym}. Secondly, taking $JC$ general here means that 
\begin{itemize}
	\item[(i)] it is not the Jacobian of a curve $C$ lying in a specific locus $W_{(P)}\subset M_g$ that will be precisely defined in case $3$ below.
	\end{itemize}
	 In this setting, we prove that $\gamma$ is not contained in $\overline{P_{g+1}}$.
	 By Lemma \ref{lem:Jacobians}, we have already proven that $\gamma$ is not contained in the Jacobian locus $\overline{J_g}$ and so up to shrink $(-\epsilon, +\epsilon)$ we can assume $\gamma((-\epsilon, +\epsilon))\setminus\{[JC]\}\subset P_{g+1}$, which means that $\gamma$ is  parametrizing a family of standard Prym varieties. 
	 Furthermore, since $\gamma((-\epsilon, \epsilon))\subset A_g$ is real analytic, we can take a local holomorphic extension to a smooth complex curve $B\supseteq\gamma((-\epsilon, \epsilon))$ as constructed by Lemma \ref{prop:ciccia}. In particular, $B$ has the following properties: $B\subset \overline{P_{g+1}}$ by the identity principle and $B\setminus \{[JC]\}\subset P_{g+1}$ because $\gamma((\epsilon, \epsilon))\cap J_g=[JC]$, $\gamma(-\epsilon, \epsilon)\setminus\{[JC]\}\subset P_{g+1}$ and $J_g\cap \overline{P_{g+1}}$ must be complex analytic. Let $B^0=B\setminus \{[JC]\}$. Then on $B^0$ we have a familify of standard Prym varieties. 

\vspace{1.5mm}
Let $\phi: \cP\to B$ be the associated family of Prym varieties (defined up to finite base change). By construction,  $P_b$ is a standard Prym variety for any $b\in B^0$ and  $P=\phi^{-1}(\gamma(0))=JC$ (i.e. it is a generalized Prym variety occurring as a Jacobian variety).
We consider $\mathcal{\pi}:\tilde{\cC}\to \cC'$, a family of coverings over $B$ naturally associated to it (defined up to finite base change by the Prym map $P$). The fibre for any $b\in B$ is a covering $\pi_b:\tilde{C}_b\to C'_b$ corresponding to a point $[C'_b, \eta_b]\in P(S)\subset \overline{R_{g+1}}$ with image $P([C'_b, \eta_b])=P_b$, the fibre of $\phi$ over $b\in B$. 

For any $b\in B^0,$ $[C'_b, \eta_b]\in R_{g+1}$ and so $ d P= d \Pr : H^1(C'_b, T_{C'_b})\to \Sym^2H^0(C'_b, \omega_{C'_b}\otimes \eta_b)^\vee$. Under the isomorphism $T_{[P_b]}A_g \simeq  \Sym^2H^0(C_b\otimes \eta_b)^\vee$, for any $\gamma'(t) \in T_{[P_b]}A_g,$ there is  $0\neq \xi_b\in H^1(C'_b, T_{C'_b})$ such that $$\gamma'(t)=d \Pr \xi_{\gamma(t)}=\cup\xi_{\gamma(t)}:H^0(C'_{\gamma(t)},\omega_{C'_{\gamma(t)}}\otimes \eta_{\gamma(t)})\to H^1(C'_{\gamma(t)},\eta_{\gamma(t)}).$$ Then, $\dim\ker \cup\xi_{\gamma(t)}=g-k.$ Indeed, for $[P_{\gamma(0)}]=[JC]\in \overline{J_g}$, using the Torelli map we have $dj(\xi_{\gamma(0)})=\cup \xi_{\gamma(0)}\in \Sym^2 H^1(C,\cO_C)\simeq T_{[JC]}A_g$. By Lemma \ref{lem:Jacobians}, $\dim \ker \cup \xi_{\gamma(0)}=g-k $ and so moving using parallel transport we first get  $\dim \ker \cup\xi_{\gamma(s)}=\dim \ker \cup \xi_{\gamma(0)}=g-k$ and then also for a general $b\in B$.

By looking at $\xi_b\in H^1(C'_b,T_{C'_b})$ as an extension under the isomorphism $H^1(C'_b,T_{C'_b})\simeq \Ext^1(\cO_{C'_b},\omega_{C'_b})$, we consider two s.e.s.
\begin{equation}
\label{ses:firstextension}
\begin{tikzcd}
0 \arrow{r} & \cO_{C'_b} \arrow{r} & E'_b 
\arrow{r}& \om _{C'_b} \arrow{r} & 0 
&:  \xi_b, \\
0 \arrow{r} & \eta_b \arrow{r} & E'_b\otimes \eta_b
\arrow{r}& \om _{C'_b}\otimes \eta_b \arrow{r} & 0 &: \xi_b,
\end{tikzcd}
\end{equation}
both classified by $\xi_b$, obtained reciprocally by twisting with $\eta_b\in \Pic^0(C_b)_2$, under the isomorphisms $\Ext^1(\cO_{C'_b}, \omega_{C't})\simeq H^1(\cO_{C'_b}\otimes \omega^{\vee}_{C'_b})\simeq H^1(\cO_{C'_b}\otimes \omega^{\vee}_{C'_b}\otimes\eta_b^2)=H^1(\cO_{C'_b}\otimes\eta_b\otimes \omega^{\vee}_{C'_b}\otimes \eta^{-1}_b)\simeq \Ext^1(\eta_b, \omega_{C'_b}\otimes\eta_b)$. Focusing on the second s.e.s., the coboundary maps in the l.e.s. in cohomology is 
$$\cup\xi_b: H^0(C'_b, \omega_{C'_b}\otimes\eta_b)\to H^0(C'_b, \omega_{C'_b}\otimes\eta_b)^\vee,$$ under the isomorphism $H^1(C'_b,\eta_b)\simeq H^0(C'_b, \omega_{C'_b}\otimes\eta_b)^\vee$, and so it corresponds exactly to the tangent vector $\gamma'(t)\in T_{[P_{\gamma(t)}]}A_g$ at $\gamma(t)$, under the isomorphism $T_{[P_{\gamma(t)}]}A_g\simeq \Sym^2H^0(C_b\otimes \eta_b)^\vee$. Then as seen before, $\dim \ker \cup\xi_{\gamma(t)} =g-k$.

\smallskip
\noindent
We now consider the map $\alpha'_b:\bigwedge^2 H^0(C'_b, E'_b\otimes \eta_b)\to H^0(C'_b,\omega_{C'_b}),$ defined under the isomorphism $ H^0(\det E'_b\otimes \eta_b)\simeq H^0(C'_b,\omega_{C'_b}),$ and we use Lemma \ref{lem-decel} to construct a useful line bundle. To apply the lemma, we just have to check that $$2 h^0(C'_b,E'_b\otimes \eta_b)-3 > h^0(C'_b, \omega_{C'_b}).$$ We have that $h^0(C'_b,\omega_{C'_b})=g+1$ and $h^0(C'_b, E'_b\otimes \eta_b)=g-k$, because $h^0(C'_b,\eta_b)=0$, $\dim \ker \cup \xi_b=g-k$ and $h^0(C'_b, E'_b\otimes \eta_b)= h^0(C'_b,\eta_b)+ \dim \ker \cup \xi_b=g-k$. Substituting in the condition above we get $k< (g-4)/2$ and this is satisfied for any $k  <\Cliff C -2$ (because $\Cliff C \leq \left\lfloor \frac{g-1}{2}\right \rfloor$, see \ref{en:goncliff}, Section \ref{sec:acgc}), according to our assumptions. 
	Applying Lemma \ref{lem-decel}, we get a decomposable element in the kernel of $\alpha'_b$, defining in a natural way (up to saturation) a line bundle $\cM_b$ on $C'_b$ such that $h^0(\cM_b)\geq 2$ and a diagram 
\begin{equation}\label{dia:twistcliff}
\begin{tikzcd}
& & 0 \arrow{d} & & \\
& & \cM_b \arrow{d}{} & & \\
0 \arrow {r} & \eta_b \arrow{r}{}  & E'_b\otimes \eta_b
\arrow{d}{} \arrow{r} &
\om_{C'_b}\otimes\eta_b \arrow{r} & 0 \\
& & \omega_{C'_b}\otimes \cM_b^{-1}\arrow{d} & &\\
&& 0 &&
\end{tikzcd}
\end{equation}
The proof of the theorem at this point is developed by distinguishing three cases: $$h^0(C'_b, \omega_{C'_b}\otimes \cM_b^{-1})=\geq 2,0,1,$$ for $b$ general in $ B$. 

\smallskip 
{\bf {\em Case  1: $h^0(C'_b, \omega_{C'_b}\otimes \cM_b^{-1})\geq 2$.}} The case in not admitted by the assumption on the Clifford index of $C$. Indeed, since both $h^0(C'_b, \cM_b)$ and $h^0(C'_b, \omega_{C'_b}\otimes \cM_b^{-1})\geq 2$, the line bundle $\cM_b$ contributes to the Clifford index of $C'_b$. We estimate it comparing the followings
\begin{gather*} h^0(C'_b, \cM_b)+h^0(C'_b, \omega_{C'_b}\otimes \cM_b^{-1})\geq g-k \\ h^0(C'_b, \cM_b)-h^0(C'_b, \omega_{C'_b}\otimes \cM_b^{-1})=\deg \cM_b - (g+1)+1,\end{gather*} given respectively by Diagram \ref{dia:twistcliff} together with $h^0(C'_b, E'_b\otimes \eta_b)=g-k$  and by the Riemann Roch formula. We get $2h^0(\cM_b)\geq \deg \cM_b-k$ and then $\Cliff (C'_b)\leq\Cliff (\cM_b)\leq k+2$. But $b$ is general in $B$ and by Lemma \ref{lem:cliffnorm} we obtain $\Cliff C\leq \Cliff (C'_b)+1\leq k+3$, which is not admitted by assumption.
 
  \smallskip 
  \noindent

{\bf {\em	Case 2: $h^0(C'_b, \omega_{C'_b}\otimes \cM_b^{-1})=0$}.} We consider the family $\phi:\cP\to B$ of Prym varieties associated to $B\subset P_{g+1}$ (up to a finite base change) and let  $\cU\subset \phi_{\ast}\Omega^1_{\cP/B}$ be the the unitary flat bundle (Definition \ref{defUK} and Remark \ref{rem:extU}) with respect to the p.v.h.s. \ref{eq:vhsAV} (case $(1)$ and $(2)$) . In this case, $\cU$ has a very special geometric property: let  $U_b$ be the fibre over a general $b\in B$,
\begin{itemize}\item[$(P_{U})$] $ U_b = \ker \cup \xi_b$ is a isotropic vector space with respect to $\alpha'_b.$ 
	\end{itemize} 
	Indeed,  $\gamma((-\epsilon, \epsilon))\subset B$ is a geodesic and so for a general $b$, $ U_b= \ker \cup \xi_b \subset H^0(C'_b,\omega_{C'_b}\otimes \eta_b).$ By assumption $h^0(C'_b, \omega_{C'_b}\otimes \cM_b^{-1})=0$ and so $\ker \cup\xi_b=H^0(C'_b, E'_b\otimes\omega_{C'_b})=H^0(C'_b, \cM_b)$, which means by the last equality that it is a isotropic subspace with respect to $\alpha'_b$ for a general $b\in B$. This proves property $(P_U)$. 
	
	Now we want to use this fact and Theorem \ref{thm:cdfU} to deduce a very special geometric property on the rank of the unitary flat bundle $\tilde{\cU}$ associated to the family $\tilde{f}: \tilde{\cC}\to B$ of genus $2g+1$ curves constructed by $\phi:\cP\to B$ (up to finite base change). To be more precise, we recall that associated to the family $\phi:\cP\to B$ there is a family  $\tilde{f}: \tilde{\cC}\to B$ of genus $2g+1$ stable curves and a $B$-involution $i:\tilde{C}\to \tilde{C}$ inducing a family of coverings $\mathcal{\pi}:\tilde{\cC}\to \cC'=\tilde{\cC}/(i)$ and in particular also a family $f':\cC'\to B$ of genus $g+1$ curves. Denote by  $\tilde{\cU}\subset \tilde{f}_\ast \omega_{\tilde{\cC}/B}$ and $\cU'\subset f'_\ast \omega_{\cC'/B}$ their unitary flat bundles (Definition \ref{defUK} and Remark \ref{rem:extU}) with respect to the p.v.h.s. \eqref{eq:vhsAV} (case $(1)$ and $(2)$, respectively). By construction, since unitary flat, there is a splitting $\tilde{\cU}=\cU^+\oplus \cU^-$ into a $i-$invariant part $\cU^+$ and an $i$-antiinvariant part $\cU^-$ and in particular $\cU^- \simeq \cU'\simeq  \cU$. 
	
	We now consider s.e.s. associated to $\tilde{\xi}_b\in H^1(\tilde{C}_b,T_{\tilde{C}_b})\simeq \Ext^1(\cO_{\tilde{C}_b},\omega_{\tilde{C}_b})$.
	\begin{equation} \label{ses:firstextensionup}
	\begin{tikzcd}
	0 \arrow{r} & \cO_{\tilde{C}_b} \arrow{r} & \tilde{E}_b
	\arrow{r}& \om _{\tilde{C}_b} \arrow{r} & 0 &: \tilde{\xi}_b .
	\end{tikzcd}
	\end{equation}
	and the map $\tilde{\alpha}: \bigwedge^2(\tilde{C}_b, \tilde{E_b}\to H^0(\tilde{C}_b,\omega_{\tilde{C}_b})$. We want to prove that the fibres $U^-_b=U_b\subset H^0(\tilde{C}_b,\omega_{\tilde{C}_b})$ are isotropic  with respect to $\tilde{\alpha}_b$ for any $t\in B$.
	By construction, the coboundary morphism $\cup\tilde{\xi}: H^0(\tilde{C}_b, \omega_{\tilde{C}_b})\to H^1(\tilde{C}_b, \cO_{\tilde{C}_b})$ is given by $$\cup \xi_b : H^0(C'_b, \omega_{C'_b})\oplus H^0(C'_b,\omega_{C'_b}\otimes \eta_b)\to H^1(C'_b, \cO_{C'_b})\oplus H^1(C'_b,\cO_{C'_b}\otimes \eta_b),$$ where $\xi_b \in H^1(C'_b, T_{C'_b})$ acts diagonally on the two pieces as defined in \ref{ses:firstextension}, under the isomorphisms 
		$H^0(\tilde{C}_b, \cO_{\tilde{C}_b})\simeq H^0(C'_b, \cO_{C'_b})\oplus H^0(C'_b,\cO_{C'_b}\otimes \eta_b)$ and $H^0(\tilde{C}_b, \omega_{\tilde{C}_b})\simeq H^0(C'_b, \omega_{C'_b})\oplus H^0(C'_b,\omega_{C'_b}\otimes \eta_b)$. 
	 Then the fibre $U^{-}_b$ of $\cU^{-}$ is $$U^-_b=U_b=\ker \{\cup \xi_b: H^0(C'_b,\omega_{C'_b}\otimes \eta_b)\to H^1(C'_b,\cO_{C'_b}\otimes \eta_b)\}.$$ So $\cU=\cU^{-}$ and consequently it is isotropic with respect to the map $\tilde{\alpha_b}$ seen under the identifications above. Summing up, we can think at  $U_b\subset H^0(\tilde{C_b},\omega_{\tilde{C_b}})$ as an isotropic space and apply Theorem \ref{thm:cdfU}, 
	getting a non constant morphism $\tilde{C_b}\to \Sigma$, for any $t\in B$, where $\Sigma$ is a curve of genus $g(\Sigma)$ equal to the rank of $\cU$ and so $g(\Sigma)=\rk \cU=\dim U_b=\dim \ker \cup \xi_b=g-k$. We now recall that for $t=\gamma(0)$, $\tilde{C}_{\gamma(0)}$ is a singular curve defining the generalized Prym variety with the associated involution. By Lemma \ref{lem:normJacPrym}, let $\nu_{\tilde{C}_{\gamma(0)}}:\tilde{C}_{\gamma(0)}^{\nu}\to \tilde{C}_{\gamma(0)}$ be the normalization map, then $C\subset \tilde{C}_{\gamma(0)}$. By using the map $\tilde{C_0}\to D$, we get by composition a non-constant map $C\to \Sigma$ of smooth projective curves of genus $g\geq 2$. But now $C$ has genus $g$, $\Sigma\ncong C$, and so by the Riemann Hurwits we have $\rk \cU=g(D)\leq (g+1)/2$. 
	
	To conclude the proof, we just notice that the property $\rk \cU\leq (g+1)/2$ is not admitted by our assumptions.. Indeed, by Lemma \ref{prop:ciccia} $\rk \cU =\rk \cK$ and $\rk \cK =\dim\ker \cdot \cup \xi_b=g-k$ so we get $(g+1)/2 \geq \rk\cU=g-k.$ Since by assumption $k<\Cliff C -3\leq \Cliff C,$ this is not possible.

 	  \smallskip 
 	  \noindent
 	  
 	{\bf {\em Case 3: $h^0(C'_b, \omega_{C'_b}\otimes \cM_b^{-1})=1$.} }  We first of all prove that in this case for any $b\in B^0$, the point of order two $\eta_b\in \Pic^0(C'_b)_2$ satisfies the following property 
 	\begin{itemize}\item[(P)]  $\eta_b=\cO_{C'_b}((s)-(s')),$ where $s' \in H^0(C'_b, \omega_{C'_b}\otimes \cM_b^{-1}),$ $s\in H^0(C'_b, \omega_{C'_b}\otimes \cM_b^{-1}\otimes \eta_b)$ and $\deg (s)=\deg (s')\geq k+2.$
 		\end{itemize}
 	
 	By assumption, $h^0(C'_b, \omega_{C'_b}\otimes \cM^{-1})=1,$ which means that there is a non zero $s'\in H^0(C'_b, \omega_{C'_b}\otimes \cM^{-1})$ and it is unique up to a multiplying by a non zero scalar so that we can reduce to work on $s$. By twisting Diagram \ref{dia:twistcliff} with $\eta_b$, we construct the following
 	\begin{equation}\label{dia:cliffCC}
 	\begin{tikzcd}
 	& & 0 \arrow{d} & & \\
 	& & \cM_b \otimes \eta_b \arrow{d}{i_2} & & \\
 	0 \arrow {r} & \cO_{C'} \arrow{r}{i} \arrow{rd}{s} & E'_b
 	\arrow{d}{p_{2}} \arrow{r} &
 	\om_{C'_b} \arrow{r} & 0 \\
 	& & \omega_{C'_b}\otimes \cM_b^{-1}\otimes\eta_b\arrow{d} & &\\
 	&& 0 &&
 	\end{tikzcd}
 	\end{equation}
 	We consider $s=p_2 \circ i\in H^0(C'_b, \omega_{C'_b}\otimes \cM_b^{-1}\otimes\eta_b)$ and we prove that $s$ is not the zero section. By contradiction, let $s\equiv 0$. Then $i(1)=i_2(\sigma)$, for some $\sigma \in H^0(\cM_b\otimes\eta_b)$ different from zero. So $\cM_b\otimes \eta_b\simeq \cO_{C'_b}$ and, since $\eta_b^2=\cO_{C'_b}$, we conclude that $\cM_b\simeq \eta_b$.  But now $h^0(C'_b,\cM_b)\geq 2,$ $h^0(C'_v, \eta_b)=0$ and $h^0(C'_b,\cM_b)=h^0(C'_b, \eta_b)=0$ leads to a contradiction. 
 	
 	\smallskip
 	\noindent
 	 Observe that  we can choose $s'\in H^0(C'_b, \omega_{C'_b}\otimes \cM^{-1}_b)$ such that $\cO_{C'_b}((s)-(s'))= \eta_b$. Then, we immediately have $s\neq s',$ because $\eta_b\neq \cO_{C'_b},$ and $\deg (s)= \deg (s'),$ because $\deg \eta_b=0$. We now prove that $\deg (s)\leq k+2$. For this, by by using Diagram \ref{dia:twistcliff} we have
 	$$g-k = h^0(C'_b,E'_b\otimes \eta_b)\geq h^0(C'_b, \cM_b)+h^0(C'_b, \omega_{C'_b}\otimes \cM_b^{-1})$$
 	and since by assumption $h^0(C'_b, \omega_{C'_b}\otimes \cM_b^{-1})=1$ we get 
 		\begin{equation}\label{eq:detM}  h^0(C'_b, \cM_b)\leq g-k-1 \quad\quad \mbox{and }\quad\quad \deg \cM_b\leq 2g-k-2.
 		\end{equation}
 	 and so $\deg (s)=\deg \omega_{C'_b}-\deg \cM_b\geq 2(g+1)-2-2g+k+2=k+2$ as wanted.
 
 	We now prove that the Jacobian variety $JC$ of a general curve $[C]\in M_g$ does not arise from the situation above. To do this, let $Z_{(P)}$ be the locus of curves $[C']\in \overline{M_{g+1}}$ admitting $\eta\in \Pic^0(C')_2$ satistying $(P)$. Let $\Delta_0\subset M_{g+1}$ be the locus of the irreducible $1-$nodal curves, defined as the closure of $\Delta_0^{0}\subset \Delta_0,$ the open Zariski dense of irreducible $1-$nodal curves. Consider the modular map 
 	$\phi^{0}: \Delta^{0}_0\to M_g$, sending any irreducible $1-$nodal curve $C'$ of genus $g+1$ to its normalization $\nu:C^{\nu}\to C'$, which is indeed a smooth curve of genus $g$. 
 	Let $W_{(P)}\subset M_g$ be the image of $\overline{Z_{(P)}}\cap \Delta^0_0$ under this map. By construction it is the locus of curves $C$, which are the normalization of a irreducible $1-$nodal curve $[C']\in \overline{Z_{(P)}},$ namely that is limit of a family of curves satisfying $(P)$. We prove that the codimension dimension of this locus is at least $1$. 
 	Let $\cH_{2d, 2d+r}$ be the Hurwitz space of maps of  degree $2d$  from a genus $(g+1)$-smooth projective curve to the projective space $\bP^1$ ramified in $2d+r$ points and let $\pi_{\cH}:\cH_{2d, 2d+r}\to M_{g+1}$ be the map sending $[f: C' \to \bP^1]\mapsto [C'].$ Consider 
 	$$ Y_d=\{[C']\in M_{g+1}\,|\, \exists M, N \in \Div (C') \mbox{ effective, } \deg M=\deg N=d\geq k+2, 2N\equiv 2M \}.$$ Then $Y_d\subset \cH_{2d, 2d+r'}$ is by definition a subvariety of maps with two branch points of ramification type fixed by $2N\equiv 2M$ and so $\dim Y_g=r'-1\leq 2g+2d-1$, because by the Riemann-Hurwitz formula any element of $Y_d$ must have $r'$ additional ramification points, with $r'\leq 2g+2d$, and furthermore since on $Y_d$ two fibres are fixed, then two branches are fixed and there is only one branch point movable by using the automorphisms of $\bP^1$. By definition $Z_{(P)}=\pi(Y_d)$ and so $\dim Z_{(P)}\leq 2g+2d-1$. 
 	 By using the compactification $\overline{\pi}_{\cH}:\overline{\cH_{2d, 2d+r}}\to \overline{M_{g+1}}$ as in \cite{HM82}, the closure $\overline{Z_{(P)}}\subset \overline{M_{g+1}}$ is compatible $\overline{\pi}_{\cH}(\overline{Y_d})$, the image of the closure $\overline{Y_d}\subset \overline{\cH}_{2d, 2d+r},$ and the dimensions of the loci are preserved. So $\dim \overline{Z_{(P)}}\leq 2g+2d-1$ and $\dim \overline{Z_{(P)}}\cap \Delta^0_0\leq\dim\overline{Z_{(P)}}-1$. Summing up, if $\overline{Z_{(P)}}-1\leq 3g-4$, $W_{(P)}$ is of codimension at least one. By contradiction, if $2g+2d-1>3g-4$, then $2d> g-3$. By assumption $2k+4>2d>g-3$, so we conclude that $k>(g-7)/2$. But now $k> (g-1)/2- 3 \geq \Cliff C-3$ (using $\Cliff C \leq \left\lfloor \frac{g-1}{2}\right \rfloor$, see \ref{en:goncliff}, Section \ref{sec:acgc}), which leads to a contradiction by assumption.

	\bibliographystyle{alpha}
	\bibliography{mybib201?}

\end{document}